\documentclass[runningheads]{llncs}
\usepackage[T1]{fontenc}
\usepackage{amssymb}
\usepackage{amsmath}
\usepackage{bm}

\usepackage{lscape}
\usepackage{graphics}
\usepackage{times}
\usepackage{color}
\usepackage{colortbl}
\usepackage{caption}
\usepackage{epsfig,latexsym,graphicx}
\usepackage{psfrag}
\usepackage{comment}

\usepackage{amsmath,amscd,amssymb,amsfonts}
\usepackage{xspace}
\usepackage{listings}
\usepackage{subfig}
\usepackage{boxedminipage}
\usepackage{url}
\usepackage[ruled,vlined,linesnumbered]{algorithm2e}
\usepackage{multirow}
\usepackage{rotating}
\usepackage{wrapfig}
\usepackage{xcolor, soul}
\sethlcolor{green}
\newtheorem{observation}{Observation}
\begin{document}
\title{On the Span of $l$ Distance Coloring of Infinite Hexagonal Grid}
\author{Sasthi C. Ghosh \and Subhasis Koley}
\institute{Advanced Computing and Microelectronics unit\\ Indian Statistical Institute, 203 B. T. Road, Kolkata 700108, India\\ Emails: \email{sasthi@isical.ac.in, subhasis.koley2@gmail.com }\\
}
\authorrunning{Sasthi C. Ghosh et al.}
\maketitle              

\begin{abstract} For a graph $G(V,E)$ and $l \in \mathbb{N}$, an $l$ distance coloring is a coloring $f: V \to \{1, 2, \cdots, n\}$ of $V$ such that $\forall u,\;v \in V,\; u\neq v,\; f(u)\neq f(v)$ when  $d(u,v) \leq l$. Here $d(u,v)$ is the distance between $u$ and $v$ and is equal to the minimum number of edges that  connect $u$ and $v$ in $G$. The span of $l$ distance coloring of $G$, $\lambda ^{l}(G)$, is the minimum $n$ among all $l$ distance coloring of $G$. A class of channel assignment problem in cellular network can be formulated as a distance graph coloring problem in regular grid graphs. The cellular network is often modelled as an infinite hexagonal grid $T_H$, and hence determining $\lambda ^{l}(T_H)$ has relevance from practical point of view.  Jacko and Jendrol [Discussiones Mathematicae Graph Theory, $2005$] determined the exact value of $\lambda ^{l}(T_H)$ for any odd $l$ and for even $l \geq 8$, it is conjectured that  
$\lambda ^{l}(T_H) =  \left[  \dfrac{3}{8} \left( \, l+\dfrac{4}{3} \right) ^2 \right]$ where $[x]$ is an integer, $x\in \mathbb{R}$ and $x-\dfrac{1}{2} < [x] \leq x+\dfrac{1}{2}$. For $l=8$, the conjecture has been proved by Sasthi and Subhasis [$22$nd Italian Conference on Theoretical Computer Science, $2021$]. In this paper, we prove the conjecture for any $l \geq 10$. 

\keywords{Distance coloring \and Span \and Hexagonal grid \and Channel assignment problem}
\end{abstract}

\section{Introduction}\label{sec:1}
In a wireless communication network, frequency channels are assigned to the transmitters for data communication. For interference free communication, frequency channels assigned to proximity transmitters or receivers must be distinct and must have predefined gaps among them. As the available frequency channel resources are limited, one of the objective of channel assignment problem (CAP) is to find out the least number of frequency channels needed for interference free communication. Frequently, cellular network is modelled as an infinite regular hexagonal grid $T_H$ or honeycomb grid (as $T_H$ is isomorphic to infinite regular honeycomb grid) for regular geometric pattern of $T_H$. In Fig.~\ref{honeycomb}, honeycomb representation of 
$T_H$ have been shown. The coordinates of the vertices of $T_H$ have also been shown here.
\begin{center}
\begin{figure}[!ht]
\centerline{\includegraphics[scale=1]{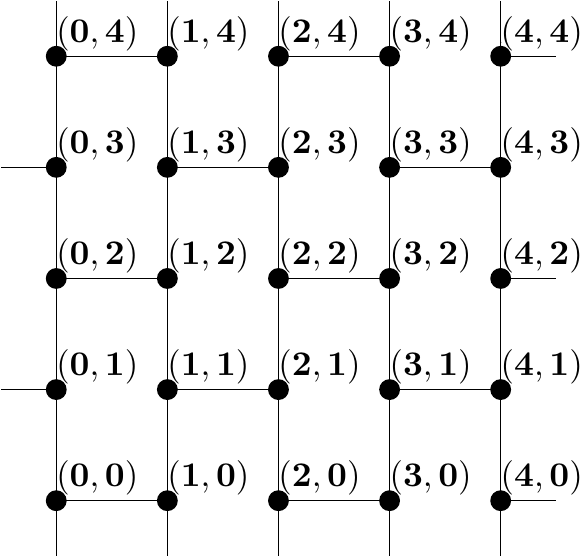}}
\caption{Honeycomb representation of $T_H$ and coordinates of its vertices.}
\label{honeycomb} 
\end{figure}
\end{center} 
The CAP can be modelled theoretically as a graph coloring problem where vertices of the graph represent receivers and color assigned to a vertex represents the frequency channel assigned to the corresponding receiver for communication~\cite{Hale}. Sometimes CAP is also modelled as an $l$ distance graph coloring problem~\cite{Wegner} or distance graph coloring problem to   incorporate the effect of interference up to multi hop distance into the modelling. Formally the distance between two vertices and the $l$ distance coloring are defined as follows.

\begin{definition}\label{defold1}~\cite{ICTCSconj}
For a graph $G(V,E)$, for any two distinct vertices $u,\;v\in V$, $d(u,v)$, the distance between $u$ and $v$, is defined as the minimum number of edges required to connect $u$ and $v$ in $G$.
\end{definition}

\begin{definition}\label{defold4}~\cite{ICTCSconj}
Given $l \in \mathbb{N}$, an $l$ distance coloring of $G$ is a coloring $f: V \to \{1, 2, \cdots, n\}$ of the vertices of $G$ such that $f(u)\neq f(v)$ for any two  distinct vertices $u$ and $v$ in $G$ with $d(u,v)\leq l$.
\end{definition}

The span $\lambda ^{l}(G)$ of $l$ distance coloring of $G$ is the minimum $n$ such that $G$ admits an $l$ distance coloring of $G$. As the cellular network is often modelled as $T_H$, several authors studied the distance graph coloring problem in $T_H$~\cite{Bertossi1,Bertossi2,conjecturejacko,soumenconj}. For odd $l$, the exact value of $\lambda ^l(T_H)$ has been determined in \cite{conjecturejacko}. For every even $l\geq 8$, it was conjectured in~\cite{conjecturejacko} that, 
\begin{eqnarray}\label{eq1}
\lambda ^l(T_H) =  \left[  \dfrac{3}{8} \left( \, l+\dfrac{4}{3} \right) ^2 \right]
\end{eqnarray} ($[x]$ is an integer, $x\in 
\mathbb{R}$ and $x-\dfrac{1}{2} < [x] \leq x+\dfrac{1}{2}$). Recently, Sasthi C. Ghosh and Subhasis Koley proved the conjecture for $l=8$~\cite{ICTCSconj}. But $\lambda ^l(T_H)$ has not been determined yet for even $l\geq 10$ \cite{conjecturejacko,surveykramer,spacking1,Spacking2}. In this paper, we prove the conjecture for $l\geq 10$. In next section, relevant preliminaries and definitions have been stated before going to the result section.

\section{Preliminaries}
\begin{definition}\label{defold2}~\cite{ICTCSconj}
A vertex with coordinates $(i,j)$ in $T_H$ is said to be a right vertex, or $x_r$ if it is connected to the vertex with coordinates $(i+1,j)$ by an edge. 
\end{definition}
A right vertex $x_r$ with coordinates $(i,j)$ is adjacent to the vertices having coordinates $(i+1,j)$, $(i,j+1)$ and $(i,j-1)$ but not adjacent to the vertex with coordinates $(i-1,j)$.

\begin{definition}\label{defold3}~\cite{ICTCSconj}
A vertex with coordinates $(i,j)$ in $T_H$ is said to be a left vertex, or $x_l$ if it is connected to the vertex with coordinates $(i-1,j)$ by an edge. 
\end{definition}
A left vertex $x_l$ with coordinates $(i,j)$ is adjacent to the vertices having coordinates $(i-1,j)$, $(i,j+1)$ and $(i,j-1)$ but not adjacent to the vertex with coordinates $(i+1,j)$.

\begin{definition}\label{defold5}~\cite{ICTCSconj}
A distance $2p$ clique $D^{2p}_x$ ($p \in \mathbb{N}$) of $T_H$ centered at vertex $x\in V(T_H)$ is defined as the maximum vertex induced sub-graph of $T_H$ such that for each pair of vertices $u,v \in V(D^{2p}_x)$, $d(u,v)\leq 2p$ and for every vertex $w \in V(D^{2p}_x)$, $d(w,x) \leq p$.    
\end{definition}
In Fig.~\ref{evenclique} different $D^{2p}_x$ in $T_H$ are shown. 
\begin{center}
\begin{figure}[h!]
\centerline{\includegraphics[scale=.8]{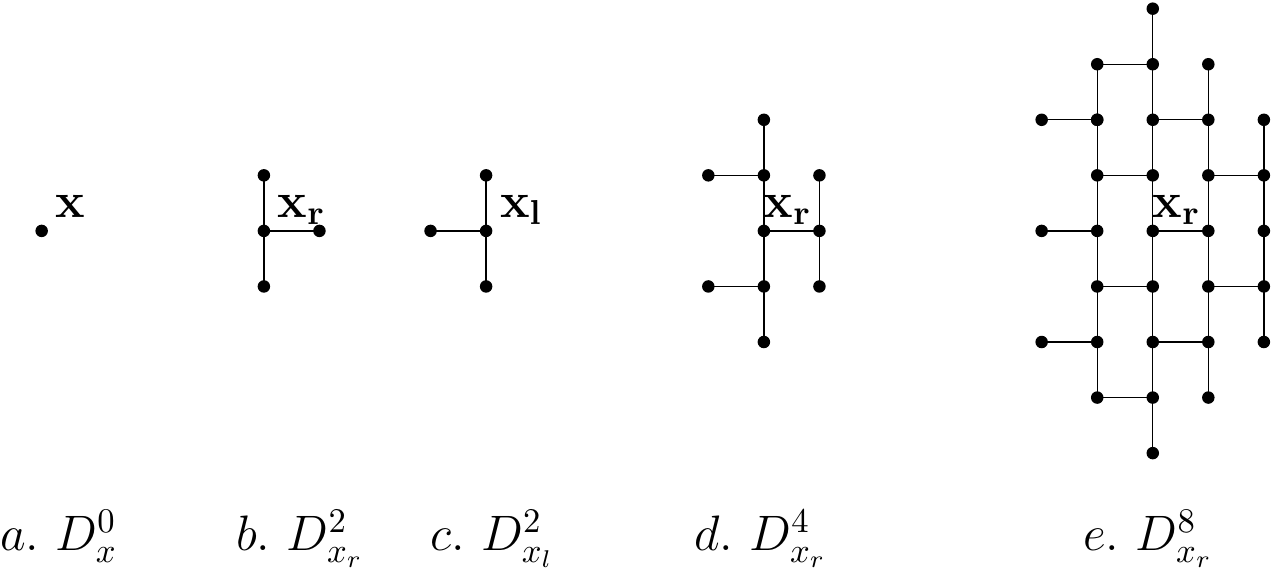}}
\caption{Different $D^{2p}_x$. }
\label{evenclique} 
\end{figure} 
\end{center}

Let us assume the right vertex $x_c(0,0)$ is the origin. As discussed in~\cite{conjecturejacko}, $T_H$ is a bipartite graph, where there exists two disjoint sets $V_0, V_1$ such that $V_0\cup V_1 =V(T_H)$ and for each edge $uv\in E(T_H)$, $u\in V_0$, $v\in V_1$ or otherwise. For any vertex $v(i,j)\in T_H$, $\tau(v)$ is defined as $\tau(v)=((i+j)\mod 2)$~\cite{conjecturejacko}. For a vertex $v(i,j)\in T_H$, if $\tau(v)=0$, then we consider $v\in V_0$ otherwise $v\in V_1$~\cite{conjecturejacko}. Observe that $x_c\in V_0$. For two vertices $v_1(i_1,j_1),v_2(i_2,j_2)\in T_H$, $d(v_1,v_2)$ can be defined as follows.

\begin{eqnarray}
\label{dist}
 d(v_1,v_2)= && \begin{cases} \vert i_1-i_2\vert + \vert j_1-j_2 \vert  & \;if \; \vert i_1-i_2 \vert \leq  \vert j_1-j_2 \vert, \cr 2\vert i_1-i_2 \vert +\tau(v_1) - \tau(v_2), & \;if\; \vert i_i-i_2 \vert > \vert j_1-j_2 \vert .  \cr \end{cases}
 \end{eqnarray}
 The proof of equation~\eqref{dist} is similar as Lemma $3.2$ in~\cite{conjecturejacko}.

Note that for any right vertex $x_r$ and left vertex $x_l$, $D^{2p}_{x_r}$ and $D^{2p}_{x_l}$ are isomorphic. So any property that holds for $D^{2p}_{x_r}$ also holds for $D^{2p}_{x_l}$. Therefore, we will state and prove our results for $D^{2p}_{x_r}$ and these also hold for $D^{2p}_{x_l}$. In following discussion by writing $x$ we actually mean $x_r$.

Let $x(i,j)$ be a right vertex in $V(T_H)$. Note that there are $3k$ vertices which are at distance $k$ from $x$, where $k \in \mathbb{Z^+}$~\cite{conjecturejacko}. Let $\mathcal{F}_{x,k}=\{u \in V(T_H): d(x,u)=k\}$ be the set of those $3k$ vertices. 

The $3k$ vertices $v_{x,k}^1, v_{x,k}^2, \cdots, v_{x,k}^{3k}$ can be partitioned into $6$ disjoint sets $G_{x,k}^1$, $G_{x,k}^2$, $\cdots$, $G_{x,k}^6$. The vertices belong to the sets $G_{x,k}^1, G_{x,k}^2, \cdots, G_{x,k}^6$ with their coordinates are mentioned below. \\
$G_{x,k}^1=\{v_{x,k}^n(i+m-1,j+k-m+1):1\leq n \leq \lceil \dfrac{k}{2} \rceil, 1\leq m \leq \lceil \dfrac{k}{2} \rceil\}$\\
$G_{x,k}^2=\{v_{x,k}^n(i+\lceil \dfrac{k}{2} \rceil,j+\lfloor \dfrac{k}{2} \rfloor-2m + 2): \lceil \frac{k}{2} \rceil+1\leq n \leq k, 1\leq m \leq \lfloor \dfrac{k}{2} \rfloor \},$\\
$G_{x,k}^3=\{v_{x,k}^n(i+\lceil \dfrac{k}{2} \rceil-m+1,j-\lfloor \dfrac{k}{2} \rfloor-m+1):k+1\leq n \leq k+\lceil \frac{k}{2} \rceil, 1\leq m \leq \lceil \dfrac{k}{2} \rceil \},$\\
$G_{x,k}^4=\{v_{x,k}^n(i-m+1,j-k+m-1):k+\lceil \frac{k}{2} \rceil+1\leq n \leq 2k, 1\leq m \leq \lfloor \dfrac{k}{2} \rfloor\},$\\
$G_{x,k}^5=\{v_{x,k}^n(i - \lfloor \dfrac{k}{2} \rfloor,j- \lceil \dfrac{k}{2} \rceil+2m-2):2k+1\leq n \leq 2k+\lceil \frac{k}{2} \rceil,1\leq m \leq \lceil \dfrac{k}{2} \rceil\},$\\
$G_{x,k}^6=\{v_{x,k}^n(i - \lfloor \dfrac{k}{2} \rfloor+m-1,j+ \lceil \dfrac{k}{2} \rceil+m-1):2k+\lceil \frac{k}{2} \rceil+1\leq n \leq 3k,1\leq m \leq \lfloor \dfrac{k}{2} \rfloor\}.$

For illustration, Figure \ref{coordinatesp7} shows the $3k$ vertices for $k=7$ and the corresponding $G_{x,k}^r$, $r=1,2,\cdots, 6$.

\begin{definition}\label{cornerver}
A vertex $v(p,q)\in \mathcal{F}_{x,k}$, $k\geq 2, k\in \mathbb{Z^+}$ is said to be a corner vertex with respect to $x(i,j)$ if one of the following six conditions holds.\\ \\
$1.\ p=i,q=j+k$.\\
$2.\ p=i+\lceil \dfrac{k}{2} \rceil, q=j+\lfloor \dfrac{k}{2} \rfloor$ .\\
$3.\ p=i+\lceil \dfrac{k}{2} \rceil,q=j-\lfloor \dfrac{k}{2} \rfloor$.\\
$4.\ p=i,q=j-k$.\\
$5.\ p=i - \lfloor \dfrac{k}{2} \rfloor,q=j- \lceil \dfrac{k}{2} \rceil$.\\
$6.\ p=i - \lfloor \dfrac{k}{2} \rfloor,q=j+ \lceil \dfrac{k}{2} \rceil$ .\\

\end{definition}

Note that there are six corner vertices in $\mathcal{F}_{x,k}$, $k\geq 2$ and each $G_{x,k}^i$, $1 \leq i \leq 6$, contains one corner vertex. More specifically, $v_{x,k}^1$, $v_{x,k}^{\lceil \frac{k}{2} \rceil+1}$, $v_{x,k}^{k+1}$, $v_{x,k}^{k+\lceil \frac{k}{2} \rceil+1}$, $v_{x,k}^{2k+1}$ and $v_{x,k}^{2k+\lceil \frac{k}{2} \rceil+1}$ are the six corner vertices in $\mathcal{F}_{x,k}$. From Definition~\ref{cornerver}, we get the coordinates of those six corner vertices.
For an example, if we consider $\mathcal{F}_{x,7}$ where coordinates of $x$ is $(0,0)$, the coordinates of the corresponding six corner vertices $v_{x,k}^1$, $v_{x,k}^{5}$, $v_{x,k}^{8}$, $v_{x,k}^{12}$, $v_{x,k}^{15}$ and $v_{x,k}^{19}$ are $(0,7)$, $(4,3)$, $(4,-3)$, $(0,-7)$, $(-3,-4)$ and $(-3,4)$ respectively.
For simplicity, we denote the six corner vertices $v_{x,k}^1$, $v_{x,k}^{\lceil \frac{k}{2} \rceil+1}$, $v_{x,k}^{k+1}$, $v_{x,k}^{k+\lceil \frac{k}{2} \rceil+1}$, $v_{x,k}^{2k+1}$ and $v_{x,k}^{2k+\lceil \frac{k}{2} \rceil+1}$ of $\mathcal{F}_{x,k}$ as $v_{x,k}^{c_1}$  $v_{x,k}^{c_2}$, $v_{x,k}^{c_3}$, $v_{x,k}^{c_4}$, $v_{x,k}^{c_5}$ and $v_{x,k}^{c_6}$ respectively. Also, we denote the set of six corner vertices of $\mathcal{F}_{x,k}$ as $\mathcal{F}_{x,k}^c$.

\begin{definition}\label{noncorner}
The set of vertices which are not corner vertices in $\mathcal{F}_{x,k}$, $k\geq 2, k\in \mathbb{Z^+}$ are said to be non corner vertices.
\end{definition}

The set of non corner vertices of $\mathcal{F}_{x,k}$ is denoted as $\mathcal{F}_{x,k}^{nc}$. Now we have the following definitions.

\begin{definition}
We define $\mathcal{S}_{x,k}^{2h}$ as the set of non corner vertices  in $\mathcal{F}_{x,k}^{nc}$ which are at distance $k$ from $x$ and at distance $2h$ from a corner vertex in $\mathcal{F}_{x,k}^{c}$. That is, $\mathcal{S}_{x,k}^{2h}=\{v: v\in \mathcal{F}_{x,k}^{nc}, \exists u\in \mathcal{F}_{x,k}^{c}, d(v,u)=2h\}$, where $k \geq 5$ and $1 \leq h \leq \lfloor \frac{k}{2} \rfloor -1$.
\end{definition}
Note that when $h=\lfloor \frac{k}{2} \rfloor$, $\mathcal{S}_{x,k}^{2h}$ includes corner vertices and hence no longer a set of non corner vertices.


As an example, for $k=7$, from Figure \ref{coordinatesp7}, we get that $\mathcal{S}_{x,k}^{2}$ $=$ $\{$ $v_{x,k}^{3k}$, $v_{x,k}^{2}$,  $v_{x,k}^{\lceil \frac{k}{2} \rceil}$, $v_{x,k}^{\lceil \frac{k}{2} \rceil+2}$, $v_{x,k}^{k}$, $v_{x,k}^{k+2}$, $v_{x,k}^{k+\lceil \frac{k}{2} \rceil}$, $v_{x,k}^{k+\lceil \frac{k}{2} \rceil+2}$, $v_{x,k}^{2k}$, $v_{x,k}^{2k+2}$, $v_{x,k}^{2k+\lceil \frac{k}{2} \rceil}$, $v_{x,k}^{2k+\lceil \frac{k}{2} \rceil+2}$ $\}$. Note that $h=\lfloor \frac{k}{2} \rfloor=3$ when $k=7$ and $\mathcal{S}_{x,7}^{6}$ includes $v_{x,7}^{c_6}$ which is a corner vertex.


\begin{definition}
We define $\mathcal{U}_{x,k}^{2h}$ as $\displaystyle \mathcal{F}_{x,k}^{c} \bigcup_{r=1}^{h}  \mathcal{S}_{x,k}^{2r}$.
\end{definition}

\begin{definition}\label{rvertex}
For a vertex $v\in V(D_x^{2p})$ and  a set $S \subseteq V(T_H)\setminus V(D_x^{2p})$, $R_v^S=\{u: d(u,v) \geq 2p+1, u \in S\}$ denotes the subset of vertices of $S$ where $f(v)$ can be reused in $S$.
\end{definition}



\begin{figure}[!ht]
\centering
\includegraphics[scale=.5]{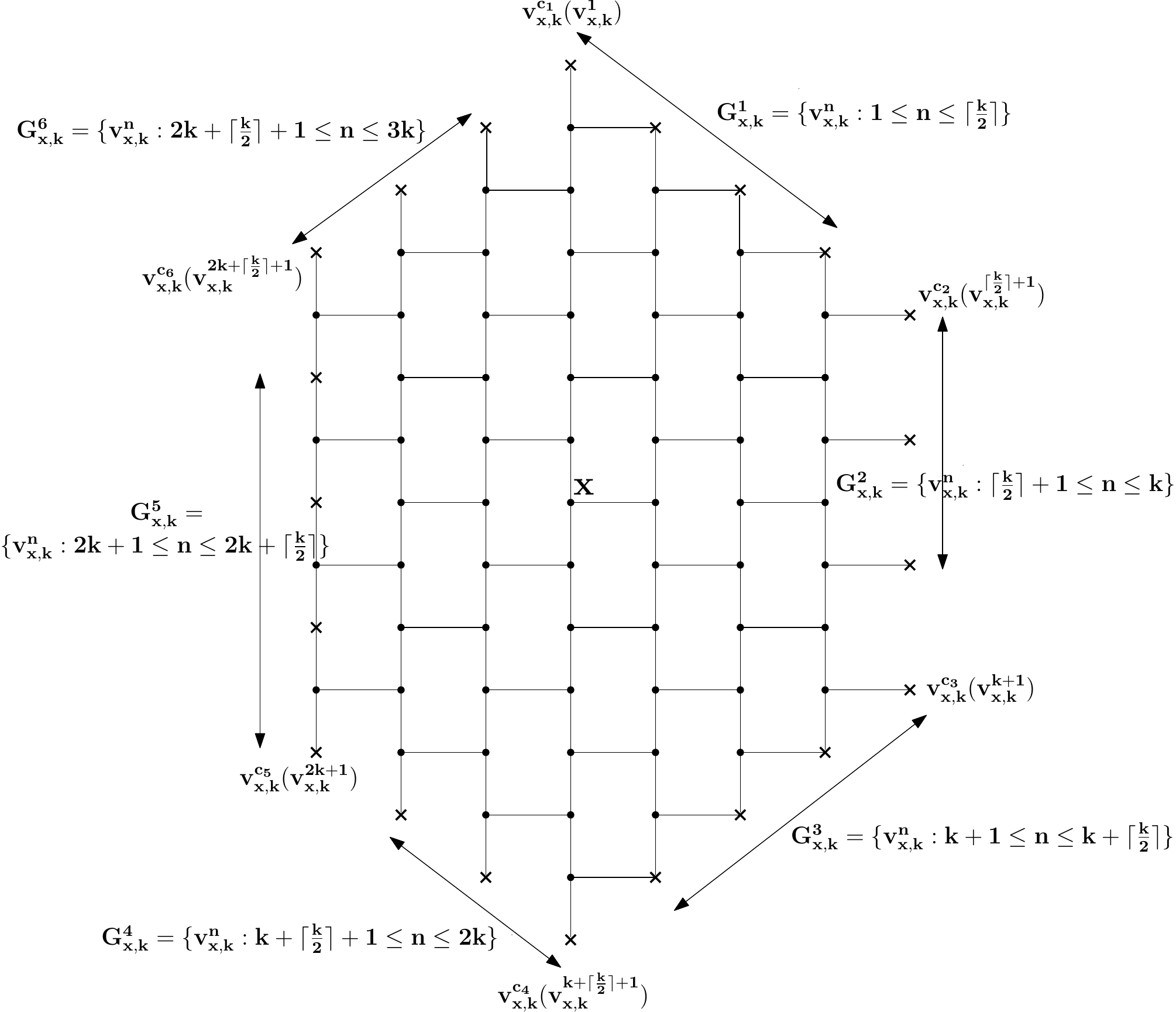}
\caption{The vertices (Marked as $\times $) at distance $k=7$ from $x$.}
\label{coordinatesp7}
\end{figure}


\section{Results}\label{sec:2}

\begin{observation}\label{obs1_new}
For any two distinct vertices $v_1,\;v_2 \in V(D_x^{2p})$, $f(v_1)\neq f(v_2)$ for $2p$ distance coloring.
\end{observation}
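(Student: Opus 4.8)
The plan is to derive the statement directly from the two governing definitions, since $D_x^{2p}$ is constructed precisely so that its vertices form a clique in the distance-$2p$ sense. First I would invoke Definition~\ref{defold5}: by construction of the distance $2p$ clique $D_x^{2p}$, every pair of vertices $u,v \in V(D_x^{2p})$ satisfies $d(u,v) \leq 2p$. In particular this applies to the two given distinct vertices, so that $d(v_1,v_2) \leq 2p$.

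Next I would apply Definition~\ref{defold4} with $l = 2p$, which defines a $2p$ distance coloring as a coloring $f$ satisfying $f(u) \neq f(v)$ for every pair of distinct vertices $u,v$ with $d(u,v) \leq 2p$. Instantiating this with $u = v_1$ and $v = v_2$, and combining $v_1 \neq v_2$ with the bound $d(v_1,v_2) \leq 2p$ established above, yields $f(v_1) \neq f(v_2)$, which is exactly the claim.

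There is no substantive obstacle here: the observation is essentially a restatement of the fact that the pairwise-distance bound built into the clique definition coincides with the separation requirement imposed by a $2p$ distance coloring. The only point deserving attention is to use the pairwise condition ``$d(u,v) \leq 2p$ for all $u,v \in V(D_x^{2p})$'' from Definition~\ref{defold5}, rather than the weaker center condition ``$d(w,x) \leq p$''; it is the former that matches the coloring constraint and makes the argument immediate. I would therefore expect the proof to be a two-line chain of implications between the two definitions, with no case analysis or computation required.
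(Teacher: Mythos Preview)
Your proposal is correct and follows essentially the same approach as the paper: invoke Definition~\ref{defold5} to obtain $d(v_1,v_2)\leq 2p$ for all pairs in $V(D_x^{2p})$, then apply the definition of a $2p$ distance coloring to conclude $f(v_1)\neq f(v_2)$. The paper's proof is the same one-line argument, only more terse.
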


\begin{proof}
From Definition \ref{defold5}, in $D_x^{2p}$, $d(v_1,v_2)\leq 2p$ for any two distinct vertices $v_1,\;v_2 \in V(D_x^{2p}$. Hence the proof.
\end{proof}

\begin{observation}\label{obs2_new}
The number of vertices in $D_x^{2p}$ is $|D_x^{2p}|=1+\frac{3p(p+1)}{2}$. 
\end{observation}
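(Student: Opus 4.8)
The plan is to show that the vertex set of $D_x^{2p}$ is exactly the ball of radius $p$ around $x$, and then to count this ball layer by layer using the fact, recalled in the preliminaries, that exactly $3k$ vertices lie at distance $k$ from $x$.

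First I would observe that the two defining conditions in Definition~\ref{defold5} are not independent. Whenever a set of vertices all lie within distance $p$ of $x$, the triangle inequality gives $d(u,v)\le d(u,x)+d(x,v)\le p+p=2p$ for any two of them, so the pairwise condition $d(u,v)\le 2p$ is automatically satisfied once the centering condition $d(w,x)\le p$ holds for every vertex. Consequently the only genuine restriction is $d(w,x)\le p$, and the maximal induced subgraph meeting both conditions is precisely the one on the vertex set $\{w\in V(T_H):d(w,x)\le p\}$. Writing $\mathcal{F}_{x,0}=\{x\}$, this means $V(D_x^{2p})=\bigcup_{k=0}^{p}\mathcal{F}_{x,k}$, since no vertex outside the radius-$p$ ball may be added (it violates the centering condition) and no vertex inside need be dropped (the triangle inequality keeps all pairwise distances at most $2p$).

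Next I would count this set by summing over the distance layers. The center $x$ contributes the single vertex at distance $0$, and for each $k$ with $1\le k\le p$ there are exactly $|\mathcal{F}_{x,k}|=3k$ vertices at distance $k$ from $x$, as recalled in the preliminaries (the disjoint partition into $G_{x,k}^1,\dots,G_{x,k}^6$ makes this explicit). Since distinct layers are pairwise disjoint,
\[
|D_x^{2p}| = 1 + \sum_{k=1}^{p} 3k = 1 + 3\cdot\frac{p(p+1)}{2} = 1 + \frac{3p(p+1)}{2}.
\]

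The argument has essentially no obstacle. The only point that needs care is the first step, namely confirming that no vertex inside the radius-$p$ ball has to be excluded in order to keep all pairwise distances at most $2p$; this is exactly what the triangle-inequality observation guarantees, so the full ball is realized as the clique and the straightforward summation finishes the count.
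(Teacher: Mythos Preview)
Your proof is correct and follows essentially the same route as the paper: both identify $V(D_x^{2p})$ with the radius-$p$ ball around $x$ and sum $|\mathcal{F}_{x,k}|=3k$ over $k=1,\dots,p$ plus the center. Your added triangle-inequality remark makes explicit why the full ball satisfies the pairwise condition, a point the paper leaves implicit, but otherwise the arguments coincide.
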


\begin{proof}
There are $3k$ vertices which are at distance $k$ from $x$ in $D_x^{2p}$, where $k=1,2,\cdots, p$. So total number of vertices in $D_x^{2p}$ is $1+3\times 1+ 3 \times 2+ \cdots + 3\times p =1+\frac{3p(p+1)}{2}$. Hence the proof.
\end{proof}

\begin{observation}\label{obs3_new}
$1+\frac{3p(p+1)}{2}+\lfloor \frac{p}{2} \rfloor=\left[  \dfrac{3}{8} \left( \, 2p+\dfrac{4}{3} \right) ^2 \right]$, where $x\in \mathbb{R}$,  $[x]$ is an integer satisfying $x-\dfrac{1}{2} < [x] \leq x+\dfrac{1}{2}$ and $\lfloor x \rfloor$ is the largest integer less than or equals to $x$.
\end{observation}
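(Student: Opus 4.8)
The plan is to reduce the claimed identity to a single nearest-integer evaluation and then finish it with a two-case parity check. First I would expand the right-hand side argument. Writing $A = \frac{3}{8}\bigl(2p + \frac{4}{3}\bigr)^2$, a direct expansion gives $A = \frac{3p^2}{2} + 2p + \frac{2}{3}$. Since $\frac{3p(p+1)}{2} = \frac{3p^2}{2} + \frac{3p}{2}$, I can rewrite $A = \frac{3p(p+1)}{2} + \frac{p}{2} + \frac{2}{3}$. This form is convenient because it isolates the integer block $\frac{3p(p+1)}{2}$ (an integer, since $p(p+1)$ is always even) that also appears on the left-hand side.

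Next I would set $N = 1 + \frac{3p(p+1)}{2} + \lfloor \frac{p}{2} \rfloor$, the left-hand side, which is manifestly an integer, and compute the difference $A - N = \frac{p}{2} - \lfloor \frac{p}{2} \rfloor - \frac{1}{3}$. Denoting by $\{\,\cdot\,\}$ the fractional part, this is $A - N = \{\frac{p}{2}\} - \frac{1}{3}$. The claim $[A] = N$ then amounts to verifying that $N$ lies in the admissible window $\bigl(A - \frac{1}{2},\, A + \frac{1}{2}\bigr]$ prescribed by the rounding convention, i.e.\ that $-\frac{1}{2} \le A - N < \frac{1}{2}$.

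The case analysis is then immediate. For even $p$ we have $\{\frac{p}{2}\} = 0$, so $A - N = -\frac{1}{3}$; for odd $p$ we have $\{\frac{p}{2}\} = \frac{1}{2}$, so $A - N = \frac{1}{6}$. In both cases the gap $A - N$ lies strictly inside $\bigl[-\frac{1}{2},\, \frac{1}{2}\bigr)$, so by the defining property $x - \frac{1}{2} < [x] \le x + \frac{1}{2}$ we conclude $[A] = N$, which is exactly the asserted equality after substituting $2p$ for $l$.

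I do not expect a genuine obstacle here beyond careful bookkeeping; the only delicate point is the asymmetric, half-open rounding convention. Because the computed gap never reaches $\frac{1}{2}$ in absolute value (it is at most $\frac{1}{3}$), the boundary behaviour of the convention is never triggered, so the rounding is unambiguous in both parity cases and no edge case needs separate treatment. I would therefore present just the expansion of $A$, the single difference computation $A - N = \{\frac{p}{2}\} - \frac{1}{3}$, and the two-line parity split as the complete argument.
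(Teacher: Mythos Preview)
Your argument is correct and follows essentially the same route as the paper: a direct expansion of the right-hand side followed by a two-case parity split on $p$. The paper substitutes $p=2q$ and $p=2q+1$ and evaluates both sides separately in each case, whereas you first compute the single difference $A-N=\{p/2\}-\tfrac{1}{3}$ and then read off the two parity values; this is a mild streamlining of the same computation, not a different idea.
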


\begin{proof}
When $p=2q$: Left hand side is $1+\frac{6q(2q+1)}{2} + \lfloor \frac{2q}{2} \rfloor = 6q^2+4q+1$ and right hand side is $\left[  \dfrac{3}{8} \left( \, 4q+\dfrac{4}{3} \right)^2 \right]=\left[ 6q^2+4q+\frac{2}{3} \right]= 6q^2+4q+1$ (since $\frac{2}{3} > \frac{1}{2}$).

When $p=2q+1$: Left hand side is $1+\frac{3(2q+1)(2q+2)}{2} + \lfloor \frac{2q+1}{2} \rfloor = 6q^2+10q+4$ and right hand side is $\left[  \dfrac{3}{8} \left( \, 4q+ 2 + \dfrac{4}{3} \right)^2 \right]=\left[ 6q^2+10q+4+\frac{1}{6} \right]=6q^2+10q+4$ (since $\frac{1}{6} < \frac{1}{2}$). Hence the proof.
\end{proof}

\begin{observation}\label{obs1}
For two vertices $v_1\in V(D_x^{2p})$ and $v_2\in V(T_H) \setminus V(D_x^{2p})$ where $d(v_1,x)=d_1$ and $d(v_2,x)=d_2$, if $d_1+d_2<2p+1$ then $f(v_1)\neq f(v_2)$ for $2p$ distance coloring.
\end{observation}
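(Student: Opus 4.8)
The plan is to reduce the claim to the single inequality $d(v_1,v_2)\leq 2p$, after which the conclusion follows immediately from the definition of $2p$ distance coloring (Definition~\ref{defold4}). Since $v_1\in V(D_x^{2p})$ while $v_2\in V(T_H)\setminus V(D_x^{2p})$, the two vertices lie in disjoint sets and are therefore distinct; hence once we know $d(v_1,v_2)\leq 2p$, Definition~\ref{defold4} forces $f(v_1)\neq f(v_2)$ for any $2p$ distance coloring $f$. The whole task is thus to bound $d(v_1,v_2)$ from above.

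To establish $d(v_1,v_2)\leq 2p$, I would invoke the triangle inequality for the shortest-path metric $d$ on $T_H$. Routing through the centre $x$ gives
\[
d(v_1,v_2)\leq d(v_1,x)+d(x,v_2)=d_1+d_2,
\]
where $d(x,v_2)=d(v_2,x)=d_2$ by symmetry of $d$. Since all pairwise distances in a graph are nonnegative integers, the hypothesis $d_1+d_2<2p+1$ is equivalent to $d_1+d_2\leq 2p$. Combining the two statements yields $d(v_1,v_2)\leq d_1+d_2\leq 2p$, as required, and the observation follows.

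There is essentially no technical obstacle here; the only points worth spelling out explicitly are that $d$ is a genuine metric on $T_H$ (so that the triangle inequality is legitimately available, $d$ being a minimum-number-of-edges distance by Definition~\ref{defold1}) and that the strict inequality $d_1+d_2<2p+1$ can be sharpened to $d_1+d_2\leq 2p$ precisely because $d_1$ and $d_2$ are integers. One could instead argue directly from the explicit distance formula~\eqref{dist} applied to the coordinates of $v_1$, $x$, and $v_2$, but that would only lengthen the proof through case analysis; the abstract metric property of shortest-path distance already supplies everything needed.
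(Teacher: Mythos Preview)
Your proof is correct and follows essentially the same approach as the paper: both arguments route through $x$ to obtain $d(v_1,v_2)\leq d_1+d_2<2p+1$, then apply the definition of $2p$ distance coloring. Your version is simply more explicit about the integrality step and the distinctness of $v_1$ and $v_2$.
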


\begin{proof}
There exists a path between $v_1$ and $v_2$ through $x$ and length of the path is $(d_1+d_2)<2p+1$. So, $d(v_1,v_2)< 2p+1$. Hence the proof.
\end{proof}

\begin{observation}\label{cornerrepeatp}
For  any corner vertex $v \in \mathcal{F}_{x,p-q}^{c}$, $f(v)$ can be reused at most twice in $\mathcal{F}_{x,p+q+1}$, where $q$ is a non-negative integer and $q=0,1, \cdots, p-2$.
\end{observation}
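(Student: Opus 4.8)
The plan is to translate ``reuse'' into a single distance condition, pin down exactly which vertices of $\mathcal{F}_{x,p+q+1}$ can legally carry the colour $f(v)$, and then run a one–dimensional packing argument on that set. Throughout write $K=p+q+1$ and let $v$ be the given corner of $\mathcal{F}_{x,p-q}$. First I would note that for every $u\in\mathcal{F}_{x,K}$ the path through $x$ gives $d(v,u)\le d(v,x)+d(x,u)=(p-q)+(p+q+1)=2p+1$. Hence, by Definition~\ref{rvertex}, the vertices of $\mathcal{F}_{x,K}$ at which $f(v)$ may appear are exactly $R_v^{\mathcal{F}_{x,K}}=\{u\in\mathcal{F}_{x,K}: d(v,u)=2p+1\}$, i.e.\ those $u$ for which $x$ lies on a $v$--$u$ geodesic. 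In any valid $2p$ distance colouring, if $f(v)$ occurs at $u_1,\dots,u_r\in\mathcal{F}_{x,K}$ then each $u_i\in R_v^{\mathcal{F}_{x,K}}$ and $d(u_i,u_j)\ge 2p+1$ for $i\ne j$; so it suffices to bound by $2$ the largest pairwise $\ge(2p+1)$ subset of $R_v^{\mathcal{F}_{x,K}}$.

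Next I would determine $R_v^{\mathcal{F}_{x,K}}$ explicitly. Taking $x=(0,0)$ and (using the symmetry of $T_H$ about $x$) the corner $v=v_{x,p-q}^{c_1}=(0,p-q)$, I claim $R_v^{\mathcal{F}_{x,K}}$ is the ``antipodal arc'' formed by the two sides of $\mathcal{F}_{x,K}$ incident to the opposite corner $v_{x,K}^{c_4}$, namely the vertices $w_a=(a,\,|a|-K)$ for $a=-\lfloor K/2\rfloor,\dots,\lceil K/2\rceil$ (these are precisely $G_{x,K}^3\cup G_{x,K}^4\cup\{v_{x,K}^{c_5}\}$, running from $v_{x,K}^{c_3}$ to $v_{x,K}^{c_5}$). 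For such $w_a$, equation~\eqref{dist} gives $|i_1-i_2|=|a|$ and $|j_1-j_2|=2p+1-|a|$; since $q\le p-2$ forces $|a|\le\lceil K/2\rceil\le p$, the first case of~\eqref{dist} applies and $d(v,w_a)=|a|+(2p+1-|a|)=2p+1$. For every vertex on the remaining four sides $G_{x,K}^1,G_{x,K}^2$ and their mirror images $G_{x,K}^6,G_{x,K}^5$, a short computation with~\eqref{dist}, using the parity relation $\tau(u)=1-\tau(v)$ together with $q\le p-2$, yields $d(v,u)\le 2p-1<2p+1$; hence these contribute nothing to $R_v^{\mathcal{F}_{x,K}}$.

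With the arc in hand the decisive simplification is that distances along it are linear. Since $w_a$ and $w_{a'}$ both lie at distance $K$ from $x$ they share the same value of $\tau$, and the triangle inequality $\big||a|-|a'|\big|\le|a-a'|$ places every such pair in the second case of~\eqref{dist}; this gives $d(w_a,w_{a'})=2|a-a'|$. Thus two arc vertices are at distance $\ge 2p+1$ if and only if $|a-a'|\ge p+1$. Now the packing argument: were three arc vertices $w_{a_1},w_{a_2},w_{a_3}$ pairwise at distance $\ge 2p+1$, ordering $a_1<a_2<a_3$ would force $a_3-a_1\ge 2(p+1)=2p+2$, contradicting $a_3-a_1\le\lceil K/2\rceil+\lfloor K/2\rfloor=K=p+q+1\le 2p-1$. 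Therefore at most two vertices of $\mathcal{F}_{x,K}$ can carry $f(v)$, which is the claim; the bound is attained at $v_{x,K}^{c_3},v_{x,K}^{c_5}$, whose distance is $2K\ge 2p+1$.

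I expect the main obstacle to be the explicit determination of $R_v^{\mathcal{F}_{x,K}}$ in the second paragraph: verifying $d(v,u)<2p+1$ uniformly over the four non-antipodal sides requires splitting~\eqref{dist} into its two cases and tracking the parity relation $\tau(u)=1-\tau(v)$, and a separate (though symmetric) treatment is needed when $v$ lies in the second automorphism class of corners. Once the arc and its linear distance law $d(w_a,w_{a'})=2|a-a'|$ are established, the concluding count is immediate.
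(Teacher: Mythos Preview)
Your proposal is correct and follows essentially the same route as the paper: both single out the corner $v=v_{x,p-q}^{c_1}$, identify $R_v^{\mathcal{F}_{x,K}}$ (with $K=p+q+1$) as the antipodal arc $G_{x,K}^{3}\cup G_{x,K}^{4}\cup\{v_{x,K}^{c_5}\}$ via equation~\eqref{dist}, and then bound reuse inside that arc. The only cosmetic difference is the final step---the paper simply partitions the arc into $X=G_{x,K}^{3}$ and $Y=G_{x,K}^{4}\cup\{v_{x,K}^{c_5}\}$, each of diameter $<2p+1$, while you parametrize the arc as $w_a$, derive $d(w_a,w_{a'})=2|a-a'|$, and run the one-dimensional packing; these are equivalent.
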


\begin{proof}
 Consider the vertex $v=v_{x,p-q}^{c_1}$ in $\mathcal{F}_{x,p-q}^{c}$. Let $k=p+q+1$. Using equation~\eqref{dist}, it can be shown that the set of vertices in $\mathcal{F}_{x,k}$ where $v$ can be reused is given by $R_v^{\mathcal{F}_{x,k}}=G_{x,k}^3 \cup G_{x,k}^4 \cup v_{x,k}^{c_5}$. Note that $R_v^{\mathcal{F}_{x,k}}=X\cup Y$ where $X=G_{x,k}^3$ and $Y=G_{x,k}^4 \cup v_{x,k}^{c_5}$. Observe that $\nexists u_1,\;u_2\in X$ such that $d(u_1,u_2)\geq 2p+1$ and $\nexists w_1,\;w_2\in Y$ such that  $d(w_1,w_2)\geq 2p+1$. Hence $f(v)$ can be reused once in $X$ and once in $Y$. Similarly we can prove that for any other corner vertex $w \in \mathcal{F}_{x,p-q}^c$, $f(w)$ can be reused at most twice in $\mathcal{F}_{x,k}$. Hence the proof. 
\end{proof}

\begin{observation}\label{noncornerrepeat1}
For any non corner vertex $v \in \mathcal{F}_{x,p-q}^{nc}$, $f(v)$ can be reused at most once in $\mathcal{F}_{x,p+q+1}$, where $q$ is a non-negative integer and $q=0,1, \cdots, p-3$.
\end{observation}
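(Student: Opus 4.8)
The plan is to follow the strategy of Observation~\ref{cornerrepeatp}, but to show that for a non corner vertex the set $R_v^{\mathcal{F}_{x,k}}$ of reusable positions forms a \emph{single} cluster of small diameter, so that the color can be reused at most once rather than twice. Set $k=p+q+1$ and note that $d(v,x)=p-q$ while $d(u,x)=k=p+q+1$ for every $u\in\mathcal{F}_{x,k}$. Since the path through $x$ has length $(p-q)+(p+q+1)=2p+1$, the triangle inequality gives $d(v,u)\le 2p+1$ for all such $u$; hence, by Definition~\ref{rvertex}, $u\in R_v^{\mathcal{F}_{x,k}}$ if and only if $d(v,u)=2p+1$, i.e.\ if and only if $x$ lies on a shortest path from $v$ to $u$. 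This reduces the problem to locating the ``antipodal'' arc of $v$ on the ring $\mathcal{F}_{x,k}$.

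First I would use the symmetry of $\mathcal{F}_{x,k}$ (the same reduction invoked at the end of Observation~\ref{cornerrepeatp}) to assume, without loss of generality, that $v\in G_{x,p-q}^1$ with $v\neq v_{x,p-q}^{c_1}$ and $v\neq v_{x,p-q}^{c_2}$, and read off the coordinates of $v$ from the displayed description of $G_{x,p-q}^1$, retaining the position parameter $m$. Then, using equation~\eqref{dist}, I would compute $R_v^{\mathcal{F}_{x,k}}$ explicitly as a function of $m$ and show that it lies within a single one of the opposite side-regions (possibly together with one adjacent corner vertex). This is the crucial structural contrast with the corner case: there $R_v^{\mathcal{F}_{x,k}}=G_{x,k}^3\cup G_{x,k}^4\cup v_{x,k}^{c_5}$ spanned two full sides whose extreme vertices $v_{x,k}^{c_3}$ and $v_{x,k}^{c_5}$ lie at distance greater than $2p$, forcing the split $X\cup Y$; for a non corner $v$ the reusable arc is narrow enough that no such split occurs.

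Finally I would verify, again through equation~\eqref{dist}, that every pair $u_1,u_2\in R_v^{\mathcal{F}_{x,k}}$ satisfies $d(u_1,u_2)\le 2p$, so that $R_v^{\mathcal{F}_{x,k}}$ cannot contain two vertices both colored $f(v)$ under a $2p$ distance coloring; hence $f(v)$ is reused at most once in $\mathcal{F}_{x,p+q+1}$. Invoking symmetry then extends the conclusion to every $v\in\mathcal{F}_{x,p-q}^{nc}$, and the range $q=0,1,\ldots,p-3$ is exactly the range in which $\mathcal{F}_{x,p-q}$ possesses non corner vertices (equivalently $p-q\ge 3$), which is why the upper index here is $p-3$ rather than the $p-2$ of Observation~\ref{cornerrepeatp}.

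The main obstacle is the case analysis in the second step. Unlike a corner, a non corner vertex ranges over a whole side, so the shape and the two endpoints of $R_v^{\mathcal{F}_{x,k}}$ shift with the position parameter $m$, and the floor and ceiling terms in the coordinates make the distance computations depend on the parity of $k$. The delicate point is to confirm, uniformly in $m$ and at the positions adjacent to each of the two corners bounding the side, that the reusable set never breaks into two far-apart pieces and never exceeds diameter $2p$; once this is established for the representative side, the symmetry argument closes the proof.
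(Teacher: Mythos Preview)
Your proposal is correct and follows essentially the same approach as the paper: pick a representative non-corner vertex in one side of $\mathcal{F}_{x,p-q}$, use equation~\eqref{dist} to identify $R_v^{\mathcal{F}_{x,k}}$ as a single side of $\mathcal{F}_{x,k}$ together with one adjacent corner, observe that this set has diameter at most $2p$, and then invoke the six-fold symmetry. The paper works with $v\in G_{x,p-q}^6$ and asserts $R_v^{\mathcal{F}_{x,k}}=G_{x,k}^3\cup\{v_{x,k}^{c_4}\}$ uniformly (without tracking the position parameter $m$ or parity that you flag as the delicate point), but otherwise the argument is the same as yours.
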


\begin{proof}
Let us consider any non corner vertex $v \in G_{x,p-q}^6$. Let $k=p+q+1$. Using equation~\eqref{dist}, we can shown that the set of vertices in $\mathcal{F}_{x,k}$ where $v$ can be reused is given by $R_v^{\mathcal{F}_{x,k}}=G_{x,k}^3 \cup \{v_{x,k}^{c_4}\}$. It can now be observed that $ \nexists \; u_1,u_2 \in R_v^{\mathcal{F}_{x,k}}$ such that $d(u_1,u_2)\geq 2p+1$. That is, $v$ can be reused only once in $\mathcal{F}_{x,k}$. For other non corner vertices in $G_{x,p-q}^1, G_{x,p-q}^2, G_{x,p-q}^3, G_{x,p-q}^4, G_{x,p-q}^5$ the same result can be proved considering $G_{x,k}^4 \cup \{v_{x,k}^{c_5}\}, G_{x,k}^5 \cup \{v_{x,k}^{c_6}\}, G_{x,k}^6 \cup \{v_{x,k}^{c_1}\}, G_{x,k}^1 \cup \{v_{x,k}^{c_2}\}, G_{x,k}^2 \cup \{v_{x,k}^{c_3}\}$ respectively. Hence the proof.  
\end{proof}

\begin{theorem}\label{lem1}
At least a new color which is not used in $D_x^{2p}$ must be introduced to color the vertices of $\mathcal{F}_{x,p+1}$ for $2p$ distance coloring.
\end{theorem}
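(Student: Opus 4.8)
The plan is to argue by contradiction: suppose every color appearing on $\mathcal{F}_{x,p+1}$ already appears somewhere in $D_x^{2p}$. First I would pin down which colors of $D_x^{2p}$ can possibly reappear on $\mathcal{F}_{x,p+1}$. Any $w\in V(D_x^{2p})$ satisfies $d(w,x)\le p$, and every target $v\in\mathcal{F}_{x,p+1}$ has $d(v,x)=p+1$; by Observation \ref{obs1}, if $d(w,x)\le p-1$ then $d(w,x)+d(v,x)<2p+1$ and so $f(w)\ne f(v)$. Hence only the $3p$ colors carried by $\mathcal{F}_{x,p}$ are reusable on $\mathcal{F}_{x,p+1}$, and the contradiction hypothesis forces the $3p+3$ vertices of $\mathcal{F}_{x,p+1}$ to be colored using only these $3p$ colors.

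Next comes the counting. Let $d$ denote the number of $\mathcal{F}_{x,p}$-colors that are used on two distinct vertices of $\mathcal{F}_{x,p+1}$; since the remaining colors are used at most once, the number of colored vertices is at most $(3p-d)+2d=3p+d$. As we must color all $3p+3$ vertices, this gives $d\ge 3$. By Observation \ref{noncornerrepeat1} a non-corner color can be reused at most once, so each doubly-used color sits on a corner vertex of $\mathcal{F}_{x,p}^{c}$, and by Observation \ref{cornerrepeatp} each corner color is used at most twice. Thus I must realize at least three of the six corner colors twice simultaneously.

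The crux is to determine \emph{exactly where} a corner color can be reused twice. I would take $v=v_{x,p}^{c_1}$ and, using \eqref{dist}, analyse its whole reuse region $R_{v}^{\mathcal{F}_{x,p+1}}=G_{x,p+1}^3\cup G_{x,p+1}^4\cup\{v_{x,p+1}^{c_5}\}$, which forms an arc running from $v_{x,p+1}^{c_3}$ through $v_{x,p+1}^{c_4}$ to $v_{x,p+1}^{c_5}$. The goal is to show that the only pair of vertices of this arc at distance $\ge 2p+1$ is the pair of extreme corners $\{v_{x,p+1}^{c_3},v_{x,p+1}^{c_5}\}$, whose mutual distance is $2p+2$, whereas every pair involving an interior (non-corner) vertex of the arc stays at distance $\le 2p$. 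By the six-fold rotational symmetry, reusing the color of $v_{x,p}^{c_i}$ twice then forces it onto the corner pair $\{v_{x,p+1}^{c_{i+2}},v_{x,p+1}^{c_{i+4}}\}$ (indices taken mod $6$). I expect this uniqueness statement to be the main obstacle, since Observation \ref{cornerrepeatp} only exhibits \emph{one} far pair, whereas here I need the full distance profile of the arc to rule out every other candidate pair.

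Finally I would cash in the obstruction. Each double reuse consumes the two same-parity corner targets $\{v_{x,p+1}^{c_{i+2}},v_{x,p+1}^{c_{i+4}}\}$, and distinct double reuses must consume disjoint corner targets because a vertex carries a single color. Hence the doubly-used colors induce a matching on the six corner vertices of $\mathcal{F}_{x,p+1}$ in which every edge joins two corners of equal parity. These corner targets split into the two classes $\{v_{x,p+1}^{c_1},v_{x,p+1}^{c_3},v_{x,p+1}^{c_5}\}$ and $\{v_{x,p+1}^{c_2},v_{x,p+1}^{c_4},v_{x,p+1}^{c_6}\}$, each of odd size three, so the matching contains at most one edge per class, giving $d\le 2$. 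This contradicts $d\ge 3$, and therefore at least one vertex of $\mathcal{F}_{x,p+1}$ must receive a color not used in $D_x^{2p}$.
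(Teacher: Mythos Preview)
Your proposal is correct and follows essentially the same route as the paper's proof: both restrict to the $3p$ colors of $\mathcal{F}_{x,p}$ via Observation~\ref{obs1}, count that at least three colors must be doubled, invoke Observations~\ref{cornerrepeatp} and~\ref{noncornerrepeat1} to force doubled colors onto corner vertices, assert that the only way to reuse $f(v_{x,p}^{c_i})$ twice is at the corner pair $\{v_{x,p+1}^{c_{i+2}},v_{x,p+1}^{c_{i+4}}\}$, and then conclude that at most two such doublings are compatible. Your matching-on-parity-classes formulation is just a clean rephrasing of the paper's observation that any two of $f(v_{x,p}^{c_1}),f(v_{x,p}^{c_3}),f(v_{x,p}^{c_5})$ (resp.\ $c_2,c_4,c_6$) would collide on a common target vertex; and the uniqueness step you flag as the main obstacle is likewise asserted in the paper without a detailed distance computation.
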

\begin{proof}
From Observation~\ref{obs1},  $f(v_1) \neq f(v_2)$ where $v_1 \in \mathcal{F}_{x,p+1}$, $v_2 \in \mathcal{F}_{x,q}$ and $q=0,1, \cdots p-1$ for $2p$ distance coloring. So, colors of vertices of $\mathcal{F}_{x,p}$ can  only be reused at the vertices of $\mathcal{F}_{x,p+1}$. Note that $\vert V(\mathcal{F}_{x,p}) \vert=3p$ and $\vert V( \mathcal{F}_{x,p+1}) \vert=3(p+1)$. 

So colors of some vertices of $\mathcal{F}_{x,p}$ must be reused more than once in  $\mathcal{F}_{x,p+1}$. 

From Observation \ref{cornerrepeatp}, we get that only the colors of the six corner vertices $v_{x,p}^{c_1}$, $v_{x,p}^{c_2}$, $\cdots$, $v_{x,p}^{c_6}$ of $\mathcal{F}_{x,p}$ can be reused twice in  $\mathcal{F}_{x,p+1}$. The only possibility to reuse $f(v_{x,p}^{c_1})$ twice in $\mathcal{F}_{x,p+1}$ is to use $f(v_{x,p}^{c_1})$ in $v_{x,p+1}^{c_3}$ and $v_{x,p+1}^{c_5}$.  That is, $f(v_{x,p+1}^{c_3})=f(v_{x,p}^{c_1})$ and $f(v_{x,p+1}^{c_5})=f(v_{x,p}^{c_1})$.  Similarly, if $f(v_{x,p}^{c_2}), f(v_{x,p}^{c_3}), \cdots, f(v_{x,p}^{c_6})$ are to be reused twice in $\mathcal{F}_{x,p+1}$, then
$f(v_{x,p+1}^{c_4})=f(v_{x,p+1}^{c_6})=f(v_{x,p}^{c_2})$; $f(v_{x,p+1}^{c_5})=f(v_{x,p+1}^{c_1})=f(v_{x,p}^{c_3})$; $f(v_{x,p+1}^{c_6})=f(v_{x,p+1}^{c_2})=f(v_{x,p}^{c_4})$; $f(v_{x,p+1}^{c_1})=f(v_{x,p+1}^{c_3})=f(v_{x,p}^{c_4})$; and $f(v_{x,p+1}^{c_2})=f(v_{x,p+1}^{c_4})=f(v_{x,p}^{c_6})$.

From Observation \ref{noncornerrepeat1}, we get that the color of any non corner vertex of $\mathcal{F}_{x,p}$ can be reused at most once in $\mathcal{F}_{x,p+1}$. So if the vertices of $\mathcal{F}_{x,p+1}$ are to be colored only with the colors used $\mathcal{F}_{x,p}$, the colors of at least three corner vertices of $\mathcal{F}_{x,p}$ must be reused twice each in  $\mathcal{F}_{x,p+1}$.

Note that any two of $f(v_{x,p}^{c_1})$, $f(v_{x,p}^{c_3})$ and $f(v_{x,p}^{c_5})$  can not be reused twice each simultaneously in $\mathcal{F}_{x,p+1}$ as in that case they must be reused in a common vertex in $\mathcal{F}_{x,p+1}$ which is not possible.  Similarly, any two of $f(v_{x,p}^{c_2})$, $f(v_{x,p}^{c_4})$ and $f(v_{x,p}^{c_6})$  can not be reused twice each simultaneously in $\mathcal{F}_{x,p+1}$ as in that case too they must be reused in a common vertex in $\mathcal{F}_{x,p+1}$ which is also not possible. So the colors of at most two corner vertices of $\mathcal{F}_{x,p}$, one from $\{v_{x,p}^{c_1}$, $v_{x,p}^{c_3}$, $v_{x,p}^{c_5}\}$ and one from $\{v_{x,p}^{c_2}$, $v_{x,p}^{c_4}$, $v_{x,p}^{c_6}\}$ can be reused twice each in $\mathcal{F}_{x,p+1}$.
In other words, at least one vertex remains uncolored in $\mathcal{F}_{x,p+1}$. Hence at least a new color which is not used in $V(D_x^{2p})$ must be introduced in $\mathcal{F}_{x,p+1}$.
\end{proof}



\begin{observation}\label{noncornerrepeatp_1} For any $v \in \mathcal{S}_{x,p-q}^{2r}$, $f(v)$  can be reused at most twice in $\displaystyle \bigcup_{h=1}^{2r+1} \mathcal{F}_{x,p+q+h}$, where $q=0,1,\cdots,p-4$ and $r=1, 2, \cdots, \lfloor \frac{p-q}{2} \rfloor -1$.\\ 

\end{observation}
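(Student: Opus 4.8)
The plan is to reduce to a single representative position of $v$ via the six-fold rotational symmetry of $\mathcal{F}_{x,k}$, then to locate the reuse region ring by ring and show that, across all $2r+1$ rings, it collapses into two low-diameter blocks. Since any property proved for a vertex of one group $G_{x,k}^i$ transfers to the others, I would fix $v\in G_{x,p-q}^{6}$. Two consecutive vertices on the same side of the hexagon differ by $2$ in mutual distance (from the coordinate description of $G_{x,k}^{6}$ and equation~\eqref{dist}, since their $i$- and $j$-differences are equal), so a vertex of $\mathcal{S}_{x,p-q}^{2r}$ on this side is the $(r+1)$-th vertex from the corner $v_{x,p-q}^{c_6}$. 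Taking $x=(0,0)$ and $m=r+1$ in the formula for $G_{x,p-q}^{6}$ gives the explicit location $v=\bigl(-\lfloor\frac{p-q}{2}\rfloor+r,\ \lceil\frac{p-q}{2}\rceil+r\bigr)$; the companion vertex at distance $2r$ from $v_{x,p-q}^{c_1}$ is handled by the same reflection symmetry.

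Next, for each target ring $k'=p+q+h$ with $1\le h\le 2r+1$, I would determine $R_v^{\mathcal{F}_{x,k'}}$, the set of vertices of that ring lying at distance at least $2p+1$ from $v$, by substituting the group coordinates of $\mathcal{F}_{x,k'}$ into equation~\eqref{dist}. Because $v$ sits on the upper-left side, the reusable vertices of each ring lie on the opposite part of the hexagon, clustered toward the corners $v_{x,k'}^{c_3}$ and $v_{x,k'}^{c_4}$. The innermost ring $h=1$ should contribute only the single antipodal arc $G_{x,k'}^{3}\cup\{v_{x,k'}^{c_4}\}$, recovering the single-reuse picture of Observation~\ref{noncornerrepeat1}, while increasing $h$ widens this arc toward both $v_{x,k'}^{c_3}$ and $v_{x,k'}^{c_4}$ (the antipode being at distance $(p-q)+(p+q+h)=2p+h$, which only exceeds the threshold $2p$ once $h\ge 1$ and opens up further as $h$ grows).

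I would then form the union $R_v=\bigcup_{h=1}^{2r+1}R_v^{\mathcal{F}_{x,p+q+h}}$ and argue, exactly as in the two-block decomposition used for corner vertices in Observation~\ref{cornerrepeatp}, that $R_v=X\cup Y$, where $X$ is the block clustered in the $v_{x,k'}^{c_3}$-direction and $Y$ the block clustered in the $v_{x,k'}^{c_4}$-direction, with the property that any two vertices inside the same block satisfy $d(\cdot,\cdot)<2p+1$, whereas two vertices taken from different blocks may be at distance at least $2p+1$. Granting this split, at most one vertex of $X$ and one vertex of $Y$ can simultaneously carry the color $f(v)$, so $f(v)$ is reused at most twice over $\bigcup_{h=1}^{2r+1}\mathcal{F}_{x,p+q+h}$, which is the claim.

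The main obstacle is the second and third steps taken together: handling all $2r+1$ rings at once and proving the clean partition of $R_v$ into the two blocks $X$ and $Y$. The piecewise form of equation~\eqref{dist}, together with the $\lfloor k'/2\rfloor$ and $\lceil k'/2\rceil$ terms, makes the bookkeeping delicate, and one must track how the reuse arc in ring $k'$ straddles the two far corners as $h$ ranges over $1,\dots,2r+1$. The tightest verifications of the within-block bound $d(\cdot,\cdot)<2p+1$ occur at the outermost ring $h=2r+1$ and near the corners, and the parity of $p-q$ and of each individual $k'$ must be carried through the floor and ceiling expressions; this is where the careful (but routine) casework lies.
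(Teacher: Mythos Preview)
Your plan is essentially the same as the paper's proof: fix a representative $v$ by symmetry, compute $R_v^{\mathcal{F}_{x,p+q+h}}$ ring by ring using equation~\eqref{dist}, and split the union into two blocks $X$, $Y$ each of diameter at most $2p$, giving at most two reuses. The only difference is scope: the paper carries out the calculation explicitly only for the single case $r=1$, $q=0$, $v=v_{x,p}^{3p}\in G_{x,p}^{6}$ (obtaining $X=R_v^{\mathcal{F}_{x,p+1}}\cup R_v^{\mathcal{F}_{x,p+2}}\cup\{v_{x,p+3}^{p+3}\}\cup G_{x,p+3}^{3}$ and $Y=G_{x,p+3}^{4}\cup\{v_{x,p+3}^{c_5}\}$) and then asserts the general case ``with similar argument,'' whereas you set up general $r,q$ from the outset and defer the parity and floor/ceiling bookkeeping as routine.
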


\begin{proof}
Let us consider the vertex $v=v_{x,p}^{3p}(i-1,j+p-1) \in \mathcal{S}_{x,p}^{2}$, where $r=1$ and $q=0$. Using equation~\eqref{dist}, it can be shown that in $\mathcal{F}_{x,p+1}$, $f(v)$ can only be reused in $G_{x,p+1}^3 \cup \{v_{x,p+1}^{c_4}\}$. That is, $R_v^{\mathcal{F}_{x,p+1}}=G_{x,p+1}^3 \cup \{v_{x,p+1}^{c_4}\}$. 
Similarly it can be shown that in $\mathcal{F}_{x,p+2}$, $f(v)$ can only be reused in $R_v^{\mathcal{F}_{x,p+2}}=G_{x,p+2}^3 \cup \{v_{x,p+2}^{c_4}\}$. 
Similarly it can be shown that in $\mathcal{F}_{x,p+3}$, $f(v)$ can only be reused in $R_v^{\mathcal{F}_{x,p+3}}=\{v_{x,p+3}^{p
+3}\} \cup G_{x,p+3}^3 \cup G_{x,p+3}^4 \cup \{v_{x,p+3}^{c_5}\}$. Now $R_v^{\mathcal{F}_{x,p+1}} \cup R_v^{\mathcal{F}_{x,p+2}} \cup R_v^{\mathcal{F}_{x,p+3}}$
can be partitioned into two disjoint sets $X=R_v^{\mathcal{F}_{x,p+1}} \cup R_v^{\mathcal{F}_{x,p+2}} \cup \{v_{x,p+3}^{p
+3}\} \cup G_{x,p+3}^3$ and $Y=G_{x,p+3}^4 \cup \{v_{x,p+3}^{c_5}\}$, where $\nexists u_1, u_2 \in X$ such that $u_1,u_2\geq 2p+1$ and $\nexists w_1, w_2 \in Y$ such that $w_1,w_2\geq 2p+1$. So $f(v)$ can be reused at most twice in $R_v^{\mathcal{F}_{x,p+1}} \cup R_v^{\mathcal{F}_{x,p+2}} \cup R_v^{\mathcal{F}_{x,p+3}}$, once in $X$ and once in $Y$. With similar argument, we can prove the same for any other vertex $v\in \mathcal{S}_{x,p-q}^{2r}$. 
\end{proof}

\begin{observation}\label{noncornerrepeatp_2}
For any $v \in \displaystyle \mathcal{F}_{x,p-q}^{nc} \setminus \mathcal{S}_{x,p-q}^{2r}$, $f(v)$  can be reused at most once in $\displaystyle \bigcup_{h=1}^{2r+1} \mathcal{F}_{x,p+q+h}$, where $q=0,1, \cdots, p-5$ and $r=1, 2, \cdots, \lfloor \frac{p-q}{2} \rfloor -1$.
\end{observation}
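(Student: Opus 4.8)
The plan is to follow the architecture of the proof of Observation~\ref{noncornerrepeatp_1} but to reach the opposite structural conclusion: here the reuse region will coalesce into a \emph{single} distance-$2p$ clique rather than splitting into two, and a single clique can host the color $f(v)$ at most once. As before I would first invoke the six-fold rotational symmetry of $\mathcal{F}_{x,k}$ about $x$ to fix one representative side, taking $v$ to be a non corner vertex of $G_{x,p-q}^6$ lying outside $\mathcal{S}_{x,p-q}^{2r}$; geometrically the essential (and only nontrivial) case is when $v$ sits deep in the interior of the arc $G^6$, at distance exceeding $2r$ from each of the two bounding corners $v_{x,p-q}^{c_6}$ and $v_{x,p-q}^{c_1}$. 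The statement for the remaining non corner vertices then follows by rotating the argument through $G^1,\ldots,G^5$, exactly as the cyclic list of reuse sets is used at the end of Observation~\ref{noncornerrepeat1}.

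Next, for each $h=1,2,\ldots,2r+1$ I would compute the reuse set $R_v^{\mathcal{F}_{x,p+q+h}}$ from the distance formula~\eqref{dist}, just as the individual rings $p+1,p+2,p+3$ are handled in Observation~\ref{noncornerrepeatp_1}. Building on the single-ring base case of Observation~\ref{noncornerrepeat1} (where $R_v^{\mathcal{F}_{x,p+q+1}}=G_{x,p+q+1}^3\cup\{v_{x,p+q+1}^{c_4}\}$), I expect each $R_v^{\mathcal{F}_{x,p+q+h}}$ to be an arc confined to the antipodal side $G^3$ together with at most its neighbouring corner. The decisive point is that, because $v$ is deep in $G^6$ rather than at distance exactly $2r$ from a corner, even the outermost ring $h=2r+1$ fails to let this arc straddle the corner $v^{c_4}$ and leak into $G^4$. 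It was exactly such a leak that produced the second clique $Y=G_{x,p+3}^4\cup\{v_{x,p+3}^{c_5}\}$ in Observation~\ref{noncornerrepeatp_1}, so its absence is what downgrades ``twice'' to ``once''.

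I would then set $U=\bigcup_{h=1}^{2r+1}R_v^{\mathcal{F}_{x,p+q+h}}$ and prove the crux claim that $U$ is a single distance-$2p$ clique, namely $d(u_1,u_2)\le 2p$ for all $u_1,u_2\in U$. Since any two vertices that receive the color $f(v)$ must be mutually at distance at least $2p+1$ (Definition~\ref{rvertex}), a set of pairwise distance at most $2p$ can carry $f(v)$ at most once, which is the assertion. To establish the claim it suffices to isolate the two extreme vertices of $U$ --- an innermost vertex on one angular flank in ring $p+q+1$ and an outermost vertex on the opposite flank in ring $p+q+2r+1$ --- and to verify through~\eqref{dist} that their separation stays $\le 2p$. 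The hypotheses $q\le p-5$ and $r\le\lfloor\frac{p-q}{2}\rfloor-1$ enter precisely here, ensuring every ring $p+q+h$ is well defined and that the arc's angular width never approaches a full side, so that the radial spread $2r$ and the angular spread together stay below $2p+1$.

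I expect the diameter estimate of the last paragraph to be the main obstacle. In Observation~\ref{noncornerrepeatp_1} it was enough to exhibit \emph{some} split into two cliques; here one must rule out \emph{every} split, i.e.\ bound the distance between the two most separated vertices of $U$ uniformly across all $2r+1$ rings. I anticipate this needing a case analysis on the parity of $p-q$ (which controls the $\lceil\cdot\rceil$ versus $\lfloor\cdot\rfloor$ in the coordinates of $G^3$ and of the corners) and on which half of $G^6$ contains $v$, combined with the two branches of~\eqref{dist}. Once the extreme pair is identified, the remaining verification should collapse to a routine inequality in $p$, $q$ and $r$.
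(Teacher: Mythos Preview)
Your approach is essentially the paper's: pick a representative vertex by symmetry, compute the reuse region $R_v$ ring by ring from the distance formula~\eqref{dist}, and show that the union is contained in a single set of pairwise distance at most $2p$, so $f(v)$ can appear there at most once. The paper in fact carries this out only for the single witness $r=1$, $q=0$, $v=v_{x,p}^{3}\in G_{x,p}^1$, writes down the resulting reuse set explicitly, observes that no two of its vertices are at distance $\ge 2p+1$, and then defers every other $(v,q,r)$ to ``similarly''. Your plan is strictly more detailed than that.

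One point of intuition you should correct before executing the computation: your expectation that ``the outermost ring $h=2r+1$ fails to let this arc straddle the corner $v^{c_4}$ and leak into $G^4$'' is not what actually happens. In the paper's explicit calculation for $v=v_{x,p}^{3}\in G_{x,p}^1$ (the vertex at distance $2r+2=4$ from the corner, i.e.\ the closest admissible one), the ring $p+3$ contributes
\[
\{v_{x,p+3}^{\,p+3+\lceil (p+3)/2\rceil}\}\ \cup\ G_{x,p+3}^{4}\ \cup\ \{v_{x,p+3}^{c_5}\}\ \cup\ \{v_{x,p+3}^{\,2(p+3)+2}\},
\]
so the arc \emph{does} spill one vertex past each bounding corner of the antipodal side. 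The mechanism that downgrades ``twice'' to ``once'' is therefore not the absence of a leak, but that the leak is only a single vertex on each flank rather than an entire neighbouring side $G^4$ as in Observation~\ref{noncornerrepeatp_1}; the union still has diameter $\le 2p$. Since your plan is to compute each $R_v^{\mathcal{F}_{x,p+q+h}}$ from~\eqref{dist} anyway, you would discover this when you do the calculation, and your diameter-bounding step (identify the two extreme vertices of $U$ and check their distance is $\le 2p$) goes through unchanged---you just need to take those extreme vertices one step further out than you anticipated.
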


\begin{proof}

Consider the re-usability of the colors of the vertices of $ \mathcal{F}_{x,p}^{nc} \setminus \mathcal{S}_{x,p}^2$ in $ \displaystyle \bigcup_{h=1}^3 \mathcal{F}_{x,p+h}$, where $r=1$ and $q=0$. Consider the vertex $u=v_{x,p}^3(i+2,j+p-2) \in \mathcal{F}_{x,p}^{nc}\setminus \mathcal{S}_{x,p}^2 $. In $ \displaystyle \bigcup_{h=1}^3 \mathcal{F}_{x,p+h}$, $f(u)$ can only be reused in $G_{x,p+1}^4 \cup \{v_{x,p+1}^{c_5}\} \cup G_{x,p+2}^4 \cup \{v_{x,p+2}^{c_5}\} \cup  \{v_{x,p+3}^{p+3+ \lceil \frac{p+3}{2} \rceil} \}\cup G_{x,p+3}^4 \cup \{v_{x,p+3}^{c_5}\} \cup \{v_{x,p+3}^{2(p+3)+2}\}  $. It may be observed that $\nexists u_1,u_2 \in R_v^{\mathcal{F}_{x,p+1} \cup \mathcal{F}_{x,p+2}\cup \mathcal{F}_{x,p+3}}$ such that $d(u_1,u_2)>2p+1$. Therefore $f(u)$ can be reused at most once in $ \displaystyle \bigcup_{h=1}^3 \mathcal{F}_{x,p+h}$. For any other vertex in $\mathcal{F}_{x,p-q}^{nc}\setminus \mathcal{S}_{x,p-q}^{2r}$ the same result can be proved similarly. Hence the proof.
\end{proof}

\begin{theorem}\label{2ndnew}
A second new color which is not used in $V(D_x^{2p}) \cup \mathcal{F}_{x,p+1}$ must be introduced to color the vertices of $\mathcal{F}_{x,p+2}\cup \mathcal{F}_{x,p+3}$ for $2p$ distance coloring.
\end{theorem}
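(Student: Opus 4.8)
The plan is to adapt the pigeonhole argument of Theorem~\ref{lem1} to the three consecutive shells $\mathcal{F}_{x,p+1}$, $\mathcal{F}_{x,p+2}$, $\mathcal{F}_{x,p+3}$ treated as a single window, since Observations~\ref{noncornerrepeatp_1} and~\ref{noncornerrepeatp_2} are tailored precisely to the reuse of shell colors over this window (the case $q=0,1,2$ with $r=1$). First I would invoke Observation~\ref{obs1} to cut down the palette: a color living at distance $s$ from $x$ can reappear in $\mathcal{F}_{x,p+2}$ only if $s\geq p-1$ and in $\mathcal{F}_{x,p+3}$ only if $s\geq p-2$, while $\mathcal{F}_{x,p+1}$ can be fed only from $\mathcal{F}_{x,p}$. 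Hence the only colors of $V(D_x^{2p})$ available on these three shells are those of $\mathcal{F}_{x,p}$, $\mathcal{F}_{x,p-1}$, $\mathcal{F}_{x,p-2}$, which, together with the single new color guaranteed by Theorem~\ref{lem1}, exhaust the colors a coloring avoiding a second new color could use.

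Next I would classify each reachable color by its reuse multiplicity over the window. For non-corner colors this is immediate: by Observations~\ref{noncornerrepeatp_1} and~\ref{noncornerrepeatp_2}, a color of $\mathcal{S}_{x,p-q}^{2}$ is reused at most twice and every other non-corner color at most once in $\bigcup_{h=1}^{3}\mathcal{F}_{x,p+q+h}$. For the six corner colors of each shell, Observation~\ref{cornerrepeatp} bounds reuse inside a single shell, and I would lift this to the whole window by re-running its distance-formula computation in $\mathcal{F}_{x,p+2}$ and $\mathcal{F}_{x,p+3}$ and by reproducing the ``common reuse vertex'' obstruction of Theorem~\ref{lem1}: two corners of opposite parity cannot both attain their maximal reuse, since that would force them onto a single outer vertex. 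The essential point, which I would isolate as the crux, is that the obstruction must be read off \emph{locally}: globally the shells $\mathcal{F}_{x,p-1}$ and $\mathcal{F}_{x,p-2}$ supply a surplus of reusable colors, so a naive total count over $9p+18$ vertices never produces a deficit. The shortage has to be extracted at the corner and near-corner regions of the outer shells, whose vertices (via equation~\eqref{dist}) are reachable only from the far, opposite side of the clique and therefore only from the scarce supply of corner and $\mathcal{S}^{2}$ colors.

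Finally I would assemble the count on this corner/near-corner substructure and show that the number of distinct colors it forces exceeds the number of reachable high-multiplicity colors by at least two, of which Theorem~\ref{lem1} already accounts for one placed in $\mathcal{F}_{x,p+1}$; the remaining one, absent from $V(D_x^{2p})\cup\mathcal{F}_{x,p+1}$, must then appear on $\mathcal{F}_{x,p+2}\cup\mathcal{F}_{x,p+3}$. The hard part will be exactly this last step: because the reuse bound for the non-corner ``middle'' colors is individually tight (multiplicity one across the whole window), the entire margin of two rests on correctly bounding the corner colors across three shells simultaneously and on ruling out the simultaneous maximal reuse of opposite corners — an analysis that does not follow from the single-shell Observation~\ref{cornerrepeatp} and must be carried out afresh through the distance formula~\eqref{dist}.
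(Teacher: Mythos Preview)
Your plan is on the right track and invokes exactly the observations the paper uses, but the paper organizes the count differently from your ``corner/near-corner substructure'' restriction, and that organizational device is worth noting. Rather than isolating a substructure across all three shells, the paper first \emph{normalizes} the coloring of $\mathcal{F}_{x,p+1}$ by a without-loss-of-generality argument: it assumes every non-corner color of $\mathcal{F}_{x,p}$ is reused exactly once there, two corner colors are reused twice (the maximum permitted by Theorem~\ref{lem1}), and the remaining four corner colors once each; any departure from this pattern simply relocates an uncolored vertex into $\mathcal{F}_{x,p+1}$ without helping in $\mathcal{F}_{x,p+2}\cup\mathcal{F}_{x,p+3}$. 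With $\mathcal{F}_{x,p+1}$ thus fixed, the paper does a \emph{global} count on the $6p+15$ vertices of $\mathcal{F}_{x,p+2}\cup\mathcal{F}_{x,p+3}$, but tracks where the high-multiplicity colors (the corners of $\mathcal{F}_{x,p-1}$, $\mathcal{F}_{x,p-2}$ and the twelve $\mathcal{S}_{x,p}^{2}$ colors) are forced to land: they compete for the six corners of $\mathcal{F}_{x,p+3}$, which costs six reuses, and the four leftover corner colors of $\mathcal{F}_{x,p}$ can feed at most two vertices of $\mathcal{F}_{x,p+2}$ because only two corner slots of $\mathcal{F}_{x,p+1}$ are still free. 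This yields a deficit of four, of which the first new color absorbs at most two.

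Your proposal to restrict to a corner substructure over all three shells would reach the same conclusion, but without the WLOG reduction you would have to carry the unresolved freedom in $\mathcal{F}_{x,p+1}$ through the whole count, which multiplies cases. The paper's normalization is the step your outline is missing; once you add it, the ``re-running the distance formula for corners across the window'' you describe becomes exactly the forced-landing analysis the paper performs, and the bookkeeping collapses to the single chain $6p+15-(6p-21)-18-12=6$, then $6-2-2=2$.
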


\begin{proof}
We consider the re-usability of the colors of $\displaystyle \bigcup_{r=0}^p \mathcal{F}_{x,r}$ in $\displaystyle \bigcup_{q=1}^3 \mathcal{F}_{x,p+q}$. From Observation~\ref{obs1}, we get that the colors used in $\displaystyle \bigcup_{r=0}^{p-3} \mathcal{F}_{x,r}$ can not be reused in $\displaystyle \bigcup_{q=1}^3 \mathcal{F}_{x,p+q}$. Again from Observation~\ref{obs1}, the colors used in $\mathcal{F}_{x,p}$, $\mathcal{F}_{x,p-1}$ and $\mathcal{F}_{x,p-2}$   can be reused in $\displaystyle \bigcup_{q=1}^3 \mathcal{F}_{x,p+q}$, $\displaystyle \bigcup_{q=2}^3 \mathcal{F}_{x,p+q}$ and $ \mathcal{F}_{x,p+3}$ respectively. 

From the proof of Theorem~\ref{lem1}, at least a vertex in $\mathcal{F}_{x,p+1}$ (say $u$) can not be colored by the colors used in $V(D_x^{2p})$. In other words, a new color which is not used in $V(D_x^{2p})$ must be introduced in $\mathcal{F}_{x,p+1}$.  From Observation~\ref{noncornerrepeat1}, the color of a non corner vertex in $\mathcal{F}_{x,p}$ can be reused at most once in $\mathcal{F}_{x,p+1}$. If the color of a non corner vertex in $\mathcal{F}_{x,p}$ is not reused in $\mathcal{F}_{x,p+1}$, then there must exists another vertex other than $u$ which can not be colored with the colors used in $V(D_x^{2p})$. 

From Observation~\ref{cornerrepeatp}, the color of a corner vertex in $\mathcal{F}_{x,p}$ can be reused at most twice in $\mathcal{F}_{x,p+1}$. If the color of a corner vertex in $\mathcal{F}_{x,p}$ is reused only once or not reused in $\mathcal{F}_{x,p+1}$, then there must exists one more vertex or two more vertices respectively, other than $u$, which can not be colored with the colors used in $V(D_x^{2p})$. Again from Theorem \ref{lem1} we get that at most $2$ corner vertices of $\mathcal{F}_{x,p}$ can be reused twice each in $\mathcal{F}_{x,p+1}$. Thus the remaining $4$ corner vertices can be reused at most once in $\mathcal{F}_{x,p+1}$. Now the new color can be used at most $3$ times in 
$\displaystyle \bigcup_{q=1}^3 \mathcal{F}_{x,p+q}$. If the new color is used $1$, $2$ or $3$ times in $\mathcal{F}_{x,p+1}$ then it can be reused $2$, $1$ or $0$ times respectively in $\displaystyle \bigcup_{q=2}^3 \mathcal{F}_{x,p+q}$. Since our proof is based on the number of vertices of $\displaystyle \bigcup_{q=1}^3 \mathcal{F}_{x,p+q}$ which remain uncolored by the colors used in $V(D_x^{2p})$ and the new color used in $\mathcal{F}_{x,p+1}$, we can assume that all non corner vertices of $\mathcal{F}_{x,p}$ is reused once in $\mathcal{F}_{x,p+1}$,  $2$ corner vertices of $\mathcal{F}_{x,p}$ are reused twice each in $\mathcal{F}_{x,p+1}$ and remaining $4$ corner vertices  of $\mathcal{F}_{x,p}$ are used only once in $\mathcal{F}_{x,p+1}$. If any of them is not reused in $\mathcal{F}_{x,p+1}$, it may color one more vertex in $\displaystyle \bigcup_{q=2}^3 \mathcal{F}_{x,p+q}$, but at the same time it will create one uncolored vertex in $\mathcal{F}_{x,p+1}$. As we are counting uncolored vertices in $\displaystyle \bigcup_{q=1}^3 \mathcal{F}_{x,p+q}$, there is no benefit of not reusing any of them in $\mathcal{F}_{x,p+1}$.



From the proof of Observation~\ref{noncornerrepeatp_1}, the color of each vertex of $\mathcal{S}_{x,p}^2$ may be reused once more in $\mathcal{F}_{x,p+3}$ even after using once in $\mathcal{F}_{x,p+1}$. Again from Observation \ref{noncornerrepeatp_2} we get that $\displaystyle \bigcup_{q=0}^2 \mathcal{F}_{x,p-q}^{nc} \setminus \mathcal{S}_{x,p}^2$ can not reused any more in $\displaystyle \bigcup_{r=1}^{3} \mathcal{F}_{x,p+r}$. Let $S$ be the subset of $\mathcal{F}_{x,p}^c$ such that for every $v \in S$, $f(v)$ has not been reused twice in $\mathcal{F}_{x,p+1}$. Since as per our assumption, $2$ corner vertices have already been reused twice each in $\mathcal{F}_{x,p+1}$, there are $4$ vertices in $S$, each of which may be reused once more in $\mathcal{F}_{x,p+2} \cup \mathcal{F}_{x,p+3}$. Note that there are $3(p+2)+3(p+3)=6p+15$ vertices in $\mathcal{F}_{x,p+2} \cup \mathcal{F}_{x,p+3}$ which can potentially be colored by the colors used in $\mathcal{F}_{x,p-1} \cup \mathcal{F}_{x,p-2} \cup \mathcal{S}_{x,p}^2 \cup S$.

Observe that there are $(3(p-1)-6)+(3(p-2)-6)=6p-21$ non corner vertices in $\mathcal{F}_{x,p-1} \cup \mathcal{F}_{x,p-2}$ whose colors can be reused at most once each in $\mathcal{F}_{x,p+2} \cup \mathcal{F}_{x,p+3}$. So, using the colors of them together, we can color at most $6p-21$ vertices of $\mathcal{F}_{x,p+2} \cup \mathcal{F}_{x,p+3}$. If $f(v)$, $v \in \mathcal{S}_{x,p}^2$, is to be reused once in $u_1 \in \mathcal{F}_{x,p+1}$ and once in $u_2\in \mathcal{F}_{x,p+3}$ then $u_1 \in \mathcal{F}_{x,p+1}^c \cup \mathcal{S}_{x,p+1}^{2}$ and $u_2 \in \mathcal{F}_{x,p+3}^c \cup \mathcal{S}_{x,p+3}^{2}$. Moreover, if $f(v)$ is reused at $u_1 \in \mathcal{S}_{x,p+1}^{2}$ then it must be reused at $u_2 \in \mathcal{F}_{x,p+3}^c$. As there are $6$ corner vertices in $\mathcal{F}_{x,p+3}$, the colors of $6$ out of the $12$ vertices of $\mathcal{S}_{x,p}^{2}$ can be reused in $\mathcal{S}_{x,p+1}^{2}$ and the remaining $6$ must be reused in  $\mathcal{S}_{x,p+3}^{2}$. Note that, if the colors of all the corner vertices of $\mathcal{F}_{x,p-2}^c$ are to be reused twice each in $\mathcal{F}_{x,p+3}$ and colors of all the vertices of $\mathcal{S}_{x,p}^{2}$ are to be reused once each in $\mathcal{F}_{x,p+3}$ then at least $12$ of them must be reused in corner vertices of $\mathcal{F}_{x,p+3}$. But there are $6$ corner vertices in $\mathcal{F}_{x,p+3}$. So, using all of them together, we can color at most $(6\times 2 + 12\times 1) - 6 = 18$ vertices in $\mathcal{F}_{x,p+3}^c \cup \mathcal{S}_{x,p+3}^{2}$.

Again from Observation \ref{cornerrepeatp}, the colors of $6$ corner vertices of $\mathcal{F}_{x,p-1}^c$ can be reused at most twice each in $\mathcal{F}_{x,p+2} \cup \mathcal{F}_{x,p+3}$. So their colors together can be reused at most $12$ vertices in $\mathcal{F}_{x,p+2} \cup \mathcal{F}_{x,p+3}$. Note that the $6$ corner vertices of $\mathcal{F}_{x,p+2}^c$ must be colored by them if the colors of $\mathcal{F}_{x,p-2}^c \cup \mathcal{S}_{x,p}^2$ are reused in $\mathcal{F}_{x,p+3}$ with their maximum re-usability. Therefore all together using the colors used in $\mathcal{F}_{x,p-1}\cup \mathcal{F}_{x,p-2}\cup \mathcal{S}_{x,p}^2$, we can color at most $(6p-21)+18+12=6p+9$ vertices in $\mathcal{F}_{x,p+2} \cup \mathcal{F}_{x,p+3}$. 
Now there are $(6p+15)-(6p+9)=6$ vertices of $\mathcal{F}_{x,p+2} \cup \mathcal{F}_{x,p+3}$ which are yet to be colored. These $6$ vertices may be colored by the colors used in the $4$ vertices of $S$. 

Since the colors of two corner vertices of $\mathcal{F}_{x,p}^c$ have already been reused in the $4$ corner vertices of  $\mathcal{F}_{x,p+1}$,  there are only  $2$ remaining corner vertices in $\mathcal{F}_{x,p+1}$ where colors of $S$ may be reused. If $f(v)$ ($v\in S$) is reused in a corner vertex of $\mathcal{F}_{x,p+1}^c$ then $f(v)$ may be reused once again in $\mathcal{F}_{x,p+3}^c \cup \mathcal{S}_{x,p+3}^2$ or $ \mathcal{F}_{x,p+2}^c \cup \mathcal{S}_{x,p+2}^2$. But the vertices of $\mathcal{F}_{x,p+3}^c \cup \mathcal{S}_{x,p+3}^2$ have already been colored. So here we consider the re usability of $f(v)$ in $ \mathcal{F}_{x,p+2}^c \cup \mathcal{S}_{x,p+2}^2$. If $f(v)$ is reused in a non corner vertex in $\mathcal{F}_{x,p+1}^c$ then it may only be reused in a corner vertex of $\mathcal{F}_{x,p+2}$.  But these vertices have already been colored by the colors of the corner vertices of $\mathcal{F}_{x,p-1}^c$. So the only possibility remaining is to reuse $f(v)$ in a corner vertex of $\mathcal{F}_{x,p+1}^c$. Since there are only $2$ corner vertices remaining in $\mathcal{F}_{x,p+1}$, at most $2$ of the $4$ vertices of $S$ can be reused once each in $\mathcal{F}_{x,p+2}$. Therefore at least $6-2=4$ vertices in $\mathcal{F}_{p+2}\cup \mathcal{F}_{p+3}$ can not be colored by the colors used in $D_x^{2p}$. Note that the new color introduced in $\mathcal{F}_{x,p+1}$ can be reused at most twice more in $\mathcal{F}_{x,p+1} \cup \mathcal{F}_{x,p+2} \cup \mathcal{F}_{x,p+3}$. Hence to color the remaining $(4-2)=2$ vertices in $\mathcal{F}_{x,p+2} \cup \mathcal{F}_{x,p+3}$, a second new color must be introduced. Hence the proof.
\end{proof}

\begin{theorem}\label{kthnew}
$\lfloor \frac{p}{2} \rfloor$ new colors must be introduced to color the vertices of $\mathcal{F}_{x,p+2\lfloor \frac{p}{2} \rfloor-1}$ for $2p$ distance coloring.
\end{theorem}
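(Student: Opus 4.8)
The plan is to prove the statement by induction on the number of new colors $k$, establishing the stronger claim that for each $k=1,2,\ldots,\lfloor \frac{p}{2}\rfloor$, at least $k$ colors not used in $V(D_x^{2p})$ must be introduced in order to color the vertices of $\displaystyle\bigcup_{j=1}^{2k-1}\mathcal{F}_{x,p+j}$; the theorem is then the case $k=\lfloor\frac{p}{2}\rfloor$, whose terminal layer is exactly $\mathcal{F}_{x,p+2\lfloor\frac{p}{2}\rfloor-1}$. Theorems~\ref{lem1} and \ref{2ndnew} already provide the base cases $k=1$ (layer $\mathcal{F}_{x,p+1}$) and $k=2$ (layers $\mathcal{F}_{x,p+2}\cup\mathcal{F}_{x,p+3}$). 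For the inductive step I assume that $k-1$ new colors are forced by the time the coloring reaches layer $p+2(k-1)-1=p+2k-3$, and I argue that extending the coloring to the two additional layers $\mathcal{F}_{x,p+2k-2}$ and $\mathcal{F}_{x,p+2k-1}$ forces a $k$-th new color. This mirrors exactly the way Theorem~\ref{2ndnew} built on Theorem~\ref{lem1}.

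The core of the step is a vertex-counting (deficit) estimate generalizing the one in the proof of Theorem~\ref{2ndnew}. First I count the vertices to be colored in the two new layers, namely $|\mathcal{F}_{x,p+2k-2}|+|\mathcal{F}_{x,p+2k-1}|=3(p+2k-2)+3(p+2k-1)=6p+12k-9$ (for $k=2$ this is $6p+15$, as in Theorem~\ref{2ndnew}). Next I bound from above how many of these vertices can receive a color already used in $V(D_x^{2p})$ together with the $k-1$ previously introduced new colors. By Observation~\ref{obs1}, a color of $\mathcal{F}_{x,p-m}$ can reach a layer $p+n$ only when $n\ge m+1$, so for the two outer layers only the inner layers $\mathcal{F}_{x,p-1},\ldots,\mathcal{F}_{x,p-(2k-2)}$ are relevant; their marginal re-usability (what is left after the earlier annulus $p+1,\ldots,p+2k-3$ has been colored) is then controlled by Observations~\ref{cornerrepeatp}--\ref{noncornerrepeatp_2}: ordinary non-corner vertices lying outside the pertinent $\mathcal{S}$-sets contribute at most once (Observation~\ref{noncornerrepeatp_2}), vertices of the sets $\mathcal{S}_{x,\cdot}^{2r}$ contribute at most twice across $2r+1$ consecutive layers (Observation~\ref{noncornerrepeatp_1}), and corner vertices contribute at most twice (Observation~\ref{cornerrepeatp}). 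As in Theorem~\ref{2ndnew}, the forced coincidences at the six corner positions of each outer layer produce a fixed loss. Summing these capacities and subtracting the mandatory coincidences yields a colorability bound that falls short of $6p+12k-9$ by a fixed positive amount.

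It remains to absorb this deficit with the $k-1$ new colors already in use. Just as the first new color could be reused at most twice more in $\mathcal{F}_{x,p+1}\cup\mathcal{F}_{x,p+2}\cup\mathcal{F}_{x,p+3}$ in Theorem~\ref{2ndnew}, each previously introduced new color can be reused only a bounded number of additional times in the two new layers (again because of the distance constraint of Observation~\ref{obs1}), and this still leaves at least one vertex uncolored; hence a $k$-th new color must be introduced. The induction runs precisely while the relevant inner layers remain nonnegative and while Observations~\ref{noncornerrepeatp_1} and \ref{noncornerrepeatp_2} apply (their hypotheses require $q\le p-4$, $q\le p-5$ and $r\le\lfloor\frac{p-q}{2}\rfloor-1$), which is exactly what caps $k$ at $\lfloor\frac{p}{2}\rfloor$ and makes $p+2\lfloor\frac{p}{2}\rfloor-1$ the last layer reached.

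The main obstacle I expect is the bookkeeping inside the capacity estimate: tracking, layer by layer, which inner colors (corner, $\mathcal{S}$-set, or ordinary non-corner) survive to reach each of the two new outer layers after the earlier annulus has been colored, counting the forced coincidences at the six corners of each outer layer, and verifying that the accumulated deficit remains at least $1$ after the $k-1$ old new colors are reused to their maximum. Equally delicate is confirming that the range conditions of Observations~\ref{noncornerrepeatp_1}--\ref{noncornerrepeatp_2} stay satisfied throughout the induction, so that the argument terminates cleanly at $k=\lfloor\frac{p}{2}\rfloor$ rather than over- or under-counting the number of forced new colors.
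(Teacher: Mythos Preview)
Your proposal follows essentially the same approach as the paper: induction on $k$ with Theorems~\ref{lem1} and~\ref{2ndnew} as base cases, and an inductive step that compares the $6p+12k-9$ vertices of the two new layers against the reuse capacity supplied by the inner layers $\mathcal{F}_{x,p-2r},\mathcal{F}_{x,p-2r+1}$ and the sets $\mathcal{S}_{x,\cdot}^{2\cdot}$ via Observations~\ref{cornerrepeatp}--\ref{noncornerrepeatp_2}, finishing with the bounded extra reuse of the previous new color. The paper simply carries out explicitly the bookkeeping you flag as the main obstacle (obtaining a deficit of $6$, then $3$ after absorbing leftover colors from the previous step, then $1$ after the last new color is reused twice), and terminates the induction at $r=\lfloor p/2\rfloor-1$ for exactly the reason you indicate.
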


\begin{proof}
From Theorem ~\ref{lem1}, we get that a new color is required to color the vertices of $\mathcal{F}_{x,p+1}$. Again from Theorem~\ref{2ndnew}, a second new color is required to color the vertices of $\mathcal{F}_{x,p+2}\cup \mathcal{F}_{x,p+3}$. We assume that $r$ th new color has been used to color the vertices of $\displaystyle \bigcup_{q=1}^{2r-1}\mathcal{F}_{x,p+q}$ and then we will show that an $r+1$ th new color will be required to color the vertices of $\mathcal{F}_{x,p+2r}\cup \mathcal{F}_{x,p+2r+1}$. 

From the Observations \ref{cornerrepeatp}, \ref{noncornerrepeat1}, \ref{noncornerrepeatp_1} and \ref{noncornerrepeatp_2}, we get that the colors used in the vertices $\mathcal{F}_{x,p-2r}\cup \mathcal{S}_{x,p}^{2r} \cup \mathcal{S}_{x,p-2}^{2(r-1)}\cup \cdots \cup \mathcal{S}_{x,p-2(r-1)}^{2(r-(r-1))}$ can be reused in $\mathcal{F}_{x,p+2r+1}$, where $r\leq \lfloor \frac{p}{2} \rfloor -1$. Let $S_1=\mathcal{S}_{x,p}^{2r} \cup \mathcal{S}_{x,p-2}^{2(r-1)}\cup \cdots \cup \mathcal{S}_{x,p-2(r-1)}^{2(r-(r-1))}$. Observe that there are $3(p-2r)-6$ and $6$ non corner and corner vertices in $\mathcal{F}_{x,p-2r}$ respectively. From Observations \ref{cornerrepeatp} and \ref{noncornerrepeat1}, the color of a corner and a non corner vertex of $\mathcal{F}_{x,p-2r}$ can be reused twice and once  in $\mathcal{F}_{x,p+2r+1}$ respectively. From similar discussion in the proof of Theorem \ref{2ndnew}, we assume that colors of all the vertices of $\mathcal{S}_{x,p-2k}^{2(r-k)}$, $0\leq k \leq r-1$ have been reused in $\mathcal{F}_{x,p+2k+1}$. Now color of a vertex of $\mathcal{S}_{x,p-2k}^{2(r-k)}$, $0\leq k \leq r-1$, if reused in $\mathcal{F}_{x,p+2k+1}$ may be reused at most once more in $\mathcal{F}_{x,p+2r+1}$.  

Since each $\mathcal{S}_{x,p-2k}^{2(r-k)}$, $0\leq k \leq r-1$, contains $12$ vertices (for $p-2k \geq 4$), there are total  $12r$ vertices, all of which may be reused in $\mathcal{F}_{x,p+2r+1}$. From the discussion stated in the proof of Theorem~\ref{2ndnew}, note that, if the colors of all the corner vertices of $\mathcal{F}_{x,p-2r}$ are to be reused twice each in $\mathcal{F}_{x,p+2r+1}$ and colors of all the vertices of $\mathcal{S}_{x,p-2k}^{2(r-k)}$, $0\leq k \leq r-1$, are to be reused once each in $\mathcal{F}_{x,p+2r+1}$ then at least $12$ of them must be reused in corner vertices of $\mathcal{F}_{x,p+2r+1}$. But there are only $6$ corner vertices in $\mathcal{F}_{x,p+2r+1}$. So all of them together may be reused in $12r+6\times 2-6=12r+6$ vertices in $\mathcal{F}_{x,p+2r+1}$ and from the discussion stated in the proof of Theorem~\ref{2ndnew}, they must be reused in $12r+6$ vertices of $\mathcal{U}_{x,p+2r+1}^{2r}$. So the colors of the vertices of $\mathcal{F}_{x,p-2r}\cup S_1$ can be reused in $(3(p-2r)-6)\times 1 + 12r+6=3p+6r$ vertices of $\mathcal{F}_{x,p+2r+1}$ but   $3(p+2r+1)=3p+6r+3$ vertices are there. So, at least $3$ vertices in $\mathcal{F}_{x,p+2r+1}$ should remain uncolored.

From the Observations \ref{cornerrepeatp}, \ref{noncornerrepeat1}, \ref{noncornerrepeatp_1} and \ref{noncornerrepeatp_2}, we get that the colors used in the vertices of $\mathcal{F}_{x,p-2r+1}\cup \mathcal{S}_{x,p-1}^{2(r-1)} \cup \mathcal{S}_{x,p-3}^{2(r-2)}\cup \cdots \cup \mathcal{S}_{x,p-1-2(r-2)}^{2((r-1)-(r-2))}$ can be reused in $\mathcal{F}_{x,p+2r} \cup \mathcal{F}_{x,p+2r+1} $, where $r\leq \lfloor \frac{p}{2} \rfloor -1$. Let $S_2=\mathcal{S}_{x,p-1}^{2(r-1)} \cup \mathcal{S}_{x,p-3}^{2(r-2)}\cup \cdots \cup \mathcal{S}_{x,p-1-2(r-2)}^{2((r-1)-(r-2))}$.
Observe that there are $3(p-2r+1)-6$ and $6$ non corner and corner vertices in $\mathcal{F}_{x,p-2r+1}$ respectively. From Observations \ref{cornerrepeatp} and \ref{noncornerrepeat1}, color of a corner and a non corner vertex of $\mathcal{F}_{x,p-2r+1}$ can be reused twice and once in $\mathcal{F}_{x,p+2r} \cup \mathcal{F}_{x,p+2r+1} $ respectively. Here also we assume that colors of all the vertices of $\mathcal{S}_{x,p-1-2k}^{2(r-1-k)}$, $0\leq k \leq r-2$, have been reused in $\mathcal{F}_{x,p+2k+2}$. Now color of a vertex of $\mathcal{S}_{x,p-1-2k}^{2(r-1-k)}$, $0\leq k \leq r-2$, if reused in $\mathcal{F}_{x,p+2k+2}$ may be reused once more in $\mathcal{F}_{x,p+2r} \cup \mathcal{F}_{x,p+2r+1}$.  Since each $\mathcal{S}_{x,p-1-2k}^{2(r-1-k)}$, $0\leq k \leq r-2$, contains $12$ vertices ($p-1-2k\geq 4$), there are total $12(r-1)$ vertices, all of which may be reused in $\mathcal{F}_{x,p+2r}\cup \mathcal{F}_{x,p+2r+1}$. So the colors of the vertices of $\mathcal{F}_{x,p-2r+1}\cup S_2$ together can be reused in $(3(p-2r+1)-6)\times 1+ 6\times 2 +12(r-1)=3p+6r-3$ vertices of $\mathcal{F}_{x,p+2r+1}\cup \mathcal{F}_{x,p+2r}$.

Therefore using the colors of the vertices of $(\mathcal{F}_{x,p-2r} \cup \mathcal{F}_{x,p-2r+1}) \cup S_1 \cup S_2$ together, we can color $(3p+6r)+(3p+6r-3)=6p+12r-3$ vertices of $\mathcal{F}_{x,p+2r} \cup \mathcal{F}_{x,p+2r+1}$. But $3(p+2r)+3(p+2r+1)=6p+12r+3$ vertices are there. So $(6p+12r+3)-(6p+12r-3)=6$ vertices remain uncolored. Using similar discussion stated in the proof of Theorem~\ref{2ndnew}, there are $6$ colors which can not be reused in $\mathcal{F}_{x,p+2r-2} \cup \mathcal{F}_{x,p+2r-1}$. Hence they may be reused in $\mathcal{F}_{x,p+2r} \cup \mathcal{F}_{x,p+2r+1}$ and in that case, if they are to be reused in $\mathcal{F}_{x,p+2r+1}$, they must be reused in $\mathcal{U}_{x,p+2r+1}^{2r}$ . Note that the vertices of $\mathcal{U}_{x,p+2r+1}^{2r}$ have already been colored by the colors of the vertices of $\mathcal{F}_{x,p-2r}\cup S_1$.  So these $6$ colors can not be reused in $ \mathcal{F}_{x,p+2r+1}$. So they must be reused in $\mathcal{F}_{x,p+2r}$. If they are to be reused in $\mathcal{F}_{x,p+2r}$, then those $6$ colors must be reused in $\mathcal{U}_{x,p+2r}^{2r}$  due to the reusing of colors of  $\mathcal{F}_{x,p-2r+1}\cup \mathcal{F}_{x,p-2r} \cup S_1 \cup S_2$ in $\mathcal{F}_{p+2r}\cup \mathcal{F}_{p+2r+1}$. Again, from similar discussion in the proof of Theorem \ref{2ndnew},, if all the colors used in  $\mathcal{F}_{x,p-2r+1}\cup \mathcal{F}_{x,p-2r} \cup S_1 \cup S_2$ are to be reused in $\mathcal{F}_{p+2r}\cup \mathcal{F}_{p+2r+1}$, then at most $3$ vertices remain uncolored in $\mathcal{U}_{x,p+2r}^{2r}$. So out of the $6$ colors, at most $3$ can be reused in $\mathcal{F}_{x,p+2r}$ and hence at least $(6-3)=3$ vertices in $\mathcal{F}_{p+2r}\cup \mathcal{F}_{p+2r+1}$ remain uncolored. 

Note that the $r$ th new color introduced and used twice in $\mathcal{F}_{x,p+2r-2} \cup \mathcal{F}_{p+2r-1}$ can be reused at most twice more in $\mathcal{F}_{x,p+2r} \cup \mathcal{F}_{p+2r+1}$. Hence to color the remaining $(3-2)=1$ vertices in $\mathcal{F}_{x,p+2r} \cup \mathcal{F}_{p+2r+1}$ the $r+1$ th  new color must be introduced. In our discussion, we considered $r\leq \lfloor \frac{p}{2}\rfloor -1$. If $r=\lfloor \frac{p}{2}\rfloor$, then the vertices of $\mathcal{S}_{x,p}^{2r}$ coincides with the corner vertices of $\mathcal{S}_{x,p}^c$ and hence the iteration terminates when $r=\lfloor \frac{p}{2}\rfloor -1 $. Hence total number of new colors required, other than the colors used in $D_x^{2p}$, is $1+(\lfloor \frac{p}{2}\rfloor -1)=\lfloor \frac{p}{2}\rfloor $. From Observation \ref{obs1_new}, we get that all colors used in $D_x^{2p}$ must be distinct. Again from Observations \ref{obs2_new} and \ref{obs3_new}, we get that $\lambda ^{2p}(T_H) \geq  \vert D_x^{2p} \vert + \lfloor \frac{p}{2} \rfloor = \left[  \dfrac{3}{8} \left(  2p+\dfrac{4}{3} \right) ^2 \right]$. It has been shown in~\cite{conjecturejacko} that $\lambda ^{2p}(T_H) \leq  \left[  \dfrac{3}{8} \left(  2p+\dfrac{4}{3} \right) ^2 \right]$. Hence the proof.

\section{Conclusion}
Jacko and Jendrol~\cite{conjecturejacko}, determined the exact value of $\lambda ^{l}(T_H)$ for any odd $l$ and for even $l \geq 8$ and it was conjectured that  
$\lambda ^{l}(T_H) =  \left[  \dfrac{3}{8} \left( \, l+\dfrac{4}{3} \right) ^2 \right]$ where $[x]$ is an integer, $x\in \mathbb{R}$ and $x-\dfrac{1}{2} < [x] \leq x+\dfrac{1}{2}$. For $l=8$, the conjecture has been proved by Sasthi and Subhasis in \cite{ICTCSconj}. In this paper, we prove the conjecture for any $l \geq 10$.

\end{proof}
\bibliographystyle{splncs04}
\bibliography{mybibfile}

\end{document}